\def\moverlay{\mathpalette\mov@rlay}
\def\mov@rlay#1#2{\leavevmode\vtop{%
   \baselineskip\z@skip \lineskiplimit-\maxdimen
   \ialign{\hfil$\m@th#1##$\hfil\cr#2\crcr}}}
\newcommand{\charfusion}[3][\mathord]{
    #1{\ifx#1\mathop\vphantom{#2}\fi
        \mathpalette\mov@rlay{#2\cr#3}
      }
    \ifx#1\mathop\expandafter\displaylimits\fi}
\newcommand{\cupdot}{\charfusion[\mathbin]{\cup}{\cdot}}
\newcommand{\bigcupdot}{\charfusion[\mathop]{\bigcup}{\cdot}}
\newcommand{\R}{\mathbb R}
\newcommand{\Q}{\mathbb Q}
\newcommand{\Z}{\mathbb{Z}}
\newcommand{\N}{\mathbb{N}}
\def\bz{ \boldsymbol{z}}
\newtheorem{theorem}{Theorem}
\newtheorem{lemma}[theorem]{Lemma}
\newtheorem{proposition}[theorem]{Proposition}
\newtheorem*{definition}{Definition}        
\newtheorem{Conjecture}{Conjecture}
\newtheorem{Prob}{Problem}
\newcommand{\neutralize}[1]{\expandafter\let\csname c@#1\endcsname\count@}
\newenvironment{Conjectureprime}[1]
   {%
   \neutralize{Conjecture}\phantomsection
   \begin{Conjecture}}
  {\end{Conjecture}}
\begin{document}

\begin{frontmatter}



\title{A note on exponential Riesz bases}


\author{Andrei Caragea}
\ead{andrei.caragea@gmail.com}

\author{Dae Gwan Lee\corref{cor1}}
\ead{daegwans@gmail.com}
\cortext[cor1]{Corresponding author}

\address{Mathematisch-Geographische Fakult\"at, Katholische Universit\"at Eichst\"att-Ingolstadt, \\
85071 Eichst{\"a}tt, Germany}



\begin{abstract}
We prove that if $I_\ell = [a_\ell,b_\ell)$, $\ell=1,\ldots,L$, are disjoint intervals in $[0,1)$ with the property that the numbers $1, a_1, \ldots, a_L, b_1, \ldots, b_L$ are linearly independent over $\Q$,
then there exist pairwise disjoint sets $\Lambda_\ell \subset \Z$, $\ell=1, \ldots, L$,
such that for every $J \subset \{ 1, \ldots , L \}$, the system $\{e^{2\pi i \lambda x} : \lambda\in \cup_{\ell \in J} \, \Lambda_\ell \}$ is a Riesz basis for $L^2 ( \cup_{\ell \in J} \, I_\ell)$.
Also, we show that for any disjoint intervals $I_\ell$, $\ell=1, \ldots, L$, contained in $[1,N)$ with $N \in \N$, the orthonormal basis $\{e^{2\pi i n x} : n \in \Z \}$ of $L^2[0,1)$ can be complemented by a Riesz basis $\{e^{2\pi i \lambda x} : \lambda\in\Lambda\}$ for $L^2(\cup_{\ell=1}^L \, I_{\ell})$ with some set $\Lambda \subset (\frac{1}{N} \Z) \backslash \Z$, in the sense that their union $\{e^{2\pi i \lambda x} : \lambda\in \Z \cup \Lambda\}$ is a Riesz basis for $L^2 ( [0,1) \cup I_1 \cup \cdots \cup I_L )$.
\end{abstract}



\begin{keyword}
exponential bases \sep
Riesz bases \sep
hierarchical structure \sep
finite union of intervals \sep
Kronecker--Weyl equidistribution along the primes


\MSC 42C15
\end{keyword}

\end{frontmatter}



\section{Introduction and Main Results}
\label{sec:intro}

In 1995, Seip \cite{Se95} showed that if $S$ is an interval contained in $[0,1)$, then there exists a set $\Lambda \subset \Z$ such that $E(\Lambda) := \{e^{2\pi i \lambda x} : \lambda\in\Lambda\}$ is a Riesz basis for $L^2(S)$.
Since then, there have been various attempts towards finding/characterizing the sets $S$ that admit a Riesz spectrum, see e.g., \cite{AAC15,CC18,CHM21,DL19,GL14,Ko15}.
A significant breakthrough was made by Kozma and Nitzan \cite{KN15} who proved that if
$[a_\ell,b_\ell)$, $\ell=1, \ldots, L$, are disjoint intervals contained in $[0,1)$, then there exists a set $\Lambda \subset \Z$ such that $E(\Lambda)$ is a Riesz basis for $L^2 ( \cup_{\ell=1}^L [a_\ell,b_\ell) )$.
Recently, Pfander, Revay and Walnut \cite{PRW21} showed that if the intervals $[a_\ell,b_\ell)$ form a partition of $[0,1)$, then the set of integers $\Z$ can be partitioned into some sets $\Lambda_\ell$, $\ell=1, \ldots, L$, such that for each $\ell$, the system $E(\Lambda_\ell)$ is a Riesz basis for $L^2[a_\ell,b_\ell)$, and moreover $E( \cup_{\ell \in J} \, \Lambda_\ell )$ is a Riesz basis for $L^2 ( \cup_{\ell \in J} \, S_\ell )$ whenever $J \subset \{ 1, \ldots , L \}$ is a consecutive index set (see \cite[Theorems 1 and 2]{PRW21}).
We would like to point out that up to date, the existence of exponential Riesz bases is known only for several classes of sets $S \subset \R$. Recently, Kozma, Nitzan and Olevskii \cite{KNO21} constructed a bounded measurable set $S \subset \R$ such that the space $L^2(S)$ has no exponential Riesz basis.
For an overview of the known results on exponential Riesz bases, we refer to \cite[Section 1]{Le21}.

We are interested in the following two problems:

\begin{Prob}[Hierarchical structured exponential Riesz bases]
\label{P1}
Given a family of disjoint sets $S_1, S_2 , \ldots, S_L \subset [0,1)$ with positive measure, can we find disjoint sets $\Lambda_1, \Lambda_2, \ldots, \Lambda_L \subset \Z$ such that for every $J \subset \{ 1, \ldots , L \}$, the system $E( \cup_{\ell \in J} \, \Lambda_\ell )$ is a Riesz basis for $L^2 ( \cup_{\ell \in J} \, S_\ell )$?
\end{Prob}

\begin{Prob}[Complementability of exponential Riesz bases]
\label{P2}
Let $\Lambda \subset \R$ be a discrete set and let $S \subset \R$ be a finite positive measure set such that $E(\Lambda)$ is a Riesz basis for $L^2(S)$.
Given a finite positive measure set $S' \subset \R \backslash S$, can we find a discrete set $\Lambda' \subset \R \backslash \Lambda$ such that
\begin{itemize}
\item
$E(\Lambda')$ is a Riesz basis for $L^2 (S')$, and

\item
$E(\Lambda \cup \Lambda')$ is a Riesz basis for $L^2 (S \cup S')$?
\end{itemize}
\end{Prob}

The second problem is closely related to the first, as it deals with the case $L=2$ under the assumption that the sets $S_1$ and $\Lambda_1$ are already fixed.

Considering the result of Kozma, Nitzan and Olevskii \cite{KNO21}, it is necessary to restrict the sets $S_\ell$, $S$ and $S'$ to certain classes of sets.
In this paper, we will address the above problems in the case that $S_\ell$, $S$ and $S'$ are intervals or finite unions of intervals.

Our first main result answers Problem~\ref{P1} in the affirmative
when $S_\ell = [a_\ell,b_\ell)$, $\ell=1,\ldots,L$, are disjoint intervals in $[0,1)$ with the property that the numbers $1, a_1, \ldots, a_L, b_1, \ldots, b_L$ are linearly independent over $\Q$,
which means that having
$q + q_1 a_1 + \ldots + q_L a_L + q_1' b_1 + \ldots + q_L' b_L = 0$ for some $q , q_\ell, q_\ell' \in \Q$ implies $q = q_\ell = q_\ell' = 0$ for all $\ell$.
The result is motivated by \cite[p.279, Claim 2]{KN15}.

\begin{theorem}\label{thm:hierarchy-for-finite-intervals-Q-linearly-indep}
Let $0 < a_1 < b_1 < \cdots < a_L < b_L < 1$ with $L \in \N$.
Assume that the numbers $1, a_1, \ldots, a_L, b_1, \ldots, b_L$ are linearly independent over $\Q$.
There exist pairwise disjoint sets $\Lambda_\ell \subset \Z$, $\ell=1, \ldots, L$,
such that for every $J \subset \{ 1, \ldots , L \}$, the system $E( \cup_{\ell \in J} \, \Lambda_\ell )$ is a Riesz basis for $L^2 ( \cup_{\ell \in J} \, [a_\ell,b_\ell) )$.
\end{theorem}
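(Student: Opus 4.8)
The plan is to reduce the claim to a single two-sided frame estimate, made uniform over all index sets, and to build the sets $\Lambda_\ell$ by an equidistribution-driven selection. Write $c_\ell = b_\ell - a_\ell$, $S_J = \bigcup_{\ell \in J}[a_\ell,b_\ell)$, $\Lambda_J = \bigcup_{\ell \in J}\Lambda_\ell$, and $e_\lambda(x) = e^{2\pi i \lambda x}$. Recall that $E(\Lambda_J)$ is a Riesz basis for $L^2(S_J)$ exactly when it is simultaneously a frame and a Riesz sequence, i.e. there are $0 < A \le B < \infty$ with $A\|f\|^2 \le \sum_{\lambda \in \Lambda_J} |\langle f, e_\lambda \rangle_{L^2(S_J)}|^2 \le B\|f\|^2$ for all $f \in L^2(S_J)$, together with the lower Riesz bound on coefficients; the lower frame bound automatically yields completeness. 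Since any such system forces the density of $\Lambda_J$ to equal $|S_J| = \sum_{\ell \in J} c_\ell$, each $\Lambda_\ell$ must have density $c_\ell$. I therefore look for pairwise disjoint $\Lambda_\ell \subset \Z$ of density $c_\ell$ and aim to verify the displayed estimate for all $2^L$ choices of $J$ at once.

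First I would record the single-interval building block. For an interval of length $c$ the frequencies $n/c$, $n \in \Z$, give (up to normalization) an orthonormal basis of its $L^2$ space, and rounding them to nearby integers produces, via a perturbation-of-lattice theorem of Avdonin/Kadec type (or directly Seip's theorem), a subset of $\Z$ of density $c$ whose exponentials form a Riesz basis. The genuine task is to choose \emph{all} the $\Lambda_\ell$ coherently. I would do this by a block-and-select scheme: partition $\Z$ into long consecutive blocks and, inside each block, allot to the $\ell$-th interval a prescribed number $\approx c_\ell \cdot (\text{block length})$ of frequencies, with the precise positions dictated by where the rotated points $(\{m a_\ell\}, \{m b_\ell\})_\ell$ fall on the torus. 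This renders the $\Lambda_\ell$ automatically disjoint and of the correct densities.

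The heart of the argument is the passage from the single-interval estimates to the simultaneous estimate on $S_J$. Expanding the frame sum, the interaction between the intervals is governed by the exponential sums $\sum_{\ell \in J} \frac{e^{2\pi i m b_\ell} - e^{2\pi i m a_\ell}}{2\pi i m}$ (essentially $\widehat{\mathds{1}_{S_J}}$) evaluated at the frequency differences $m = \lambda - \lambda'$. Here the hypothesis that $1, a_1, \ldots, a_L, b_1, \ldots, b_L$ are linearly independent over $\Q$ enters decisively: it rules out any resonance $m a_\ell, m b_\ell \in \Z$, and by the Kronecker--Weyl theorem it forces the vectors $(\{m a_\ell\}, \{m b_\ell\})_\ell$ to equidistribute. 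Using the quantitative, prime-indexed form of this equidistribution (Kronecker--Weyl along the primes), which supplies a uniform error term, I would show that the off-diagonal contributions average out controllably, so that the combined analysis operator is a bounded perturbation of those attached to the individual intervals. This delivers constants $A, B$ that are uniform in $J$.

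I expect the main obstacle to be precisely this \emph{simultaneity}: one fixed placement of the $\Lambda_\ell$ must preserve the lower frame bound for all $2^L$ unions $S_J$ at once, and the cross-interval terms never become small—they only equidistribute. Extracting a uniform lower bound from equidistribution rather than from smallness is the delicate point, and it is exactly where the quantitative Kronecker--Weyl input is essential (and the reason for working along the primes, which provides the needed uniform rate), while the $\Q$-linear independence is used in full strength to exclude the degenerate alignments of the endpoints that would otherwise collapse the bound.
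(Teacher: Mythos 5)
Your proposal has a genuine gap at its central step. You reduce the theorem to showing that the ``off-diagonal contributions average out controllably, so that the combined analysis operator is a bounded perturbation of those attached to the individual intervals,'' but you give no mechanism for this, and none is available: equidistribution of the vectors $(\{m a_\ell\},\{m b_\ell\})_\ell$ controls \emph{averages} of the cross terms, not the bottom of the spectrum of the Gram or frame operator, and a lower frame/Riesz bound cannot be extracted from the statement that off-diagonal entries equidistribute rather than decay. You yourself flag this as ``the delicate point,'' but the proposal stops exactly there; as written, its hardest step is asserted rather than proved. Two supporting claims are also off the mark: the Kronecker--Weyl theorem along the primes (Proposition \ref{prop:multi-dim-Vinogradov}) is a purely qualitative equidistribution statement and supplies no ``uniform error term,'' and passing to primes does not improve any rate of equidistribution --- that is not why primes appear in this problem.

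The paper's argument is structurally different and sidesteps the perturbative difficulty entirely. Kronecker--Weyl along the primes is used only to select one large \emph{prime} $N$ for which the fractional parts satisfy the nesting \eqref{eqn:Naell-Nbell-intervals-nested}; with respect to the grid $\tfrac{1}{N}\Z$ the Kozma--Nitzan sets $A_{\geq n}$ then become single nested intervals, each attributable to exactly one original interval $[a_\ell,b_\ell)$, and Seip's theorem supplies a Riesz basis $E(\Lambda^{(n)})\subset E(N\Z)$ for each. The sets $\Lambda_\ell$ are assembled from these pieces with integer shift factors, and the entire point of requiring $N$ prime is Chebotar\"{e}v's theorem on roots of unity: it makes every square submatrix of the $N\times N$ DFT matrix invertible, which is what allows the combination lemma to tolerate \emph{arbitrary} (non-consecutive) shift factors (Lemma \ref{lem:KN15-Lemma2-variant-Nprime}). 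That flexibility is precisely what is needed when one restricts to a subfamily $J$, where the surviving shifts are no longer consecutive. Your proposal uses neither ingredient in the role it actually plays, and without them (or a substitute) the simultaneous lower bound over all $2^L$ unions is not established.
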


Concerning Problem~\ref{P2}, we have the following result which builds on the fact that $E(\Z)$ is an orthonormal basis (thus, a Riesz basis) for $L^2[0,1)$.

\begin{theorem}\label{thm:unit-interval-complementable-with-finite-union-intervals}
Let $1 \leq a_1 < b_1 < a_2 < b_2 < \cdots < a_L < b_L \leq N$ with $L, N \in \N$.
There exists a set $\Lambda' \subset (\frac{1}{N} \Z) \backslash \Z$ such that
\begin{itemize}
\item
$E(\Lambda')$ is a Riesz basis for $L^2 ( \cup_{\ell =1}^L [a_\ell,b_\ell) )$, and

\item
$E(\Z \cup \Lambda')$ is a Riesz basis for $L^2 ( [0,1) \cup [a_1,b_1) \cup \cdots \cup [a_L,b_L) )$.
\end{itemize}
\end{theorem}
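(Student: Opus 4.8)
The plan is to pass to the $N$-fold fiberization of $L^2[0,N)$ and thereby reduce the statement to a finite-dimensional linear-algebra problem governed by the discrete Fourier matrix. Since every $[a_\ell,b_\ell)\subset[1,N)$ and since $E(\tfrac1N\Z)$ is a constant multiple of an orthonormal basis of $L^2[0,N)$, I would begin with the unitary $U\colon L^2[0,N)\to L^2([0,1);\mathbb{C}^N)$ given by $(Uf)(x)=(f(x),f(x+1),\dots,f(x+N-1))$. A direct computation shows that $U$ sends $e^{2\pi i(k/N)x}$ to $e^{2\pi i(k/N)x}\,w_{k\bmod N}$, where $w_j=(e^{2\pi i jm/N})_{m=0}^{N-1}$ are the columns of the DFT matrix $W$, an orthogonal basis of $\mathbb{C}^N$. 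Writing $S=[0,1)\cup I$ with $I=\bigcup_\ell[a_\ell,b_\ell)$, the space $L^2(S)$ fiberizes to $L^2([0,1);V(x))$, where $V(x)=\mathbb{C}^{A(x)}$ is the coordinate subspace indexed by the occupied levels $A(x)=\{0\}\cup\{m\ge1: x+m\in I\}$. The map $x\mapsto A(x)$ is piecewise constant, so $[0,1)$ splits into finitely many subintervals on each of which $A(x)\equiv A_r$ and $d(x):=|A(x)|\equiv d_r$.

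Next I would fix the frequency set. Keep all of $\Z$ (the coset $j=0$, giving the fiber direction $w_0=(1,\dots,1)$ everywhere), and for $j=1,\dots,N-1$ set $\Omega_j=\{x\in[0,1): d(x)\ge j+1\}$, a finite union of subintervals of $[0,1)$, nested as $\Omega_1\supseteq\cdots\supseteq\Omega_{N-1}$. By the Kozma--Nitzan theorem there is $M_j\subset\Z$ with $E(M_j)$ a Riesz basis for $L^2(\Omega_j)$; put $\Lambda'=\bigcup_{j=1}^{N-1}(M_j+\tfrac jN)\subset(\tfrac1N\Z)\setminus\Z$. The structural point is that at a fiber $x$ the nesting forces the active cosets to be exactly the consecutive block $\{0,1,\dots,d(x)-1\}$, and the corresponding projected columns $P_{A(x)}w_0,\dots,P_{A(x)}w_{d(x)-1}$ form the square matrix $[\zeta^{mj}]_{m\in A(x),\,0\le j\le d(x)-1}$ with $\zeta=e^{2\pi i/N}$, a Vandermonde matrix in the distinct nodes $\{\zeta^m:m\in A(x)\}$, hence invertible. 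Because only finitely many level sets $A(x)$ occur, these Vandermonde blocks have uniformly bounded condition numbers. The same computation with the row $m=0$ and the column $j=0$ deleted (leaving a Vandermonde after factoring $\zeta^m$ from each row) handles the companion claim that $E(\Lambda')$ is a Riesz basis for $L^2(I)$.

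Writing $F=\sum_\lambda c_\lambda\,e^{2\pi i\lambda x}$, abbreviating by $\phi\in L^2[0,1)$ the $\Z$-part (full spectrum) and by $g_j\in\overline{\operatorname{span}}\,E(M_j)$ the coset-$j$ part, the two-sided Riesz bound for $E(\Z\cup\Lambda')$ becomes, after fiberization,
\[
\int_0^1\Big\|P_{A(x)}\Big(\phi(x)w_0+\sum_{j\ge1}e^{2\pi i jx/N}g_j(x)\,w_j\Big)\Big\|^2\,dx\ \asymp\ \|\phi\|_{L^2[0,1)}^2+\sum_{j\ge1}\|g_j\|_{L^2[0,1)}^2 .
\]
The upper bound is immediate from $\|P_{A(x)}M(x)v\|\le\sqrt N\,\|v\|$, since $M(x)=W\,\mathrm{diag}(e^{2\pi i jx/N})_{j=0}^{N-1}$ is $\sqrt N$ times a unitary matrix. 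The lower bound is the crux and the main obstacle: fiberwise the projected columns have a nontrivial kernel, so no pointwise lower bound can hold, and the saving information is global, namely that $E(M_j)$ being a Riesz basis for $L^2(\Omega_j)$ forces the spectral concentration $\|g_j\|_{L^2(\Omega_j)}\asymp\|g_j\|_{L^2[0,1)}$, so that the mass of $g_j$ away from the region $\Omega_j$ where coset $j$ is genuinely needed is only a controlled fraction of its total mass.

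I would establish the lower bound by combining this concentration with the uniform Vandermonde invertibility of the active consecutive-coset blocks, controlling the interference of the inactive cosets on each region. I expect the clean route to be a triangular induction on $\max_x d(x)$: the deepest (highest-dimensional) regions, where only the top coset is active, should be treated first and peeled off, reducing the number of levels and the ambient $N$. Verifying that the inactive interference can indeed be absorbed --- equivalently, that the closed spans of $E(\Z)$ and $E(\Lambda')$ meet at a positive angle in $L^2(S)$ --- is the technical heart of the argument. Once the lower bound is in hand, the matching density, reflected in $\int_0^1 d(x)\,dx=|S|=1+|I|$, yields completeness, and both conclusions of the theorem follow.
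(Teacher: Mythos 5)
Your construction of $\Lambda'$ is in substance identical to the paper's: after rescaling by $\frac1N$, your level sets $\Omega_j=\{x: d(x)\ge j+1\}$ are exactly the nested sets $A_{\geq j+1}(N,\frac1N V)$ of \eqref{eqn:A-geq-n}, you reserve the coset $j=0$ for $E(\Z)$, you put a Kozma--Nitzan Riesz basis $E(M_j)$ on each $\Omega_j$ shifted into the coset $j/N$, and you observe that deleting the $j=0$ coset (equivalently, shifting all level sets down by one index) handles the claim that $E(\Lambda')$ alone is a Riesz basis for $L^2(\cup_\ell[a_\ell,b_\ell))$. The paper does precisely this, and your Vandermonde observation for the consecutive block $\{0,1,\dots,d(x)-1\}$ is the correct structural reason the construction works.

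The gap is that you never prove the one estimate that carries all the weight. Everything up to your displayed two-sided bound is bookkeeping; the lower bound \emph{is} the theorem, and your proposal explicitly defers it (``I would establish\dots'', ``I expect the clean route to be\dots'', ``the technical heart of the argument''). What you are trying to prove ab initio is exactly Lemma~2 of Kozma--Nitzan \cite{KN15} (Lemma~\ref{lem:KN15-Lemma2} above), which the paper invokes as a black box: since your shift factors are the consecutive integers $0,1,\dots,d(x)-1$, that lemma applies verbatim (after relabelling shifts $n\mapsto n-1$) and closes the argument with no further work. If you insist on a self-contained proof, your ``peel off the deepest level first'' plan is indeed the right triangular induction, but note two points it glosses over: (i) the pointwise kernel problem you identify is resolved in \cite{KN15} by estimating $\|f_n\|$ against the analysis coefficients of the truncation $f_{\geq n}$ only, i.e.\ the triangular scheme proves the \emph{frame} inequality, not directly the synthesis-operator bound you wrote down; and (ii) the Riesz-\emph{sequence} half does not follow from the same induction but from a complementation/duality argument (Proposition~\ref{prop:Prop5-4-BCMS19} applied to $[0,\frac1N)\setminus A_{\geq n}$, as in the appendix). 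Relatedly, ``matching density yields completeness'' is not a valid step on its own; completeness comes from the frame lower bound, so you must prove both halves. With the citation to \cite[Lemma~2]{KN15} inserted in place of the deferred estimate, your proof coincides with the paper's.
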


This theorem answers Problem~\ref{P2} in the affirmative when $S = [0,1)$, $\Lambda = \Z$, and $S'$ is a finite union of disjoint bounded intervals in $[1, \infty)$.

While the result of Pfander, Revay and Walnut \cite{PRW21} relies on Avdonin's theorem and the ergodic properties of a certain type of integer sequences,
Theorem \ref{thm:hierarchy-for-finite-intervals-Q-linearly-indep} is based on a refinement of the key lemma of \cite{KN15} for primes, together with Chebotar\"{e}v's theorem on roots of unity and the Kronecker--Weyl equidistribution theorem along the primes.

\subsection{Remarks}

We state the following conjecture which improves upon Theorem \ref{thm:hierarchy-for-finite-intervals-Q-linearly-indep}.

\begin{Conjecture}
\label{conj:full-hierarchy}
Let $[a_\ell,b_\ell)$, $\ell=1, \ldots, L$, be disjoint intervals contained in $[0,1)$, that is, $0 \leq a_1 < b_1 \leq a_2 < b_2 \leq \cdots \leq a_L < b_L \leq 1$.
There exist pairwise disjoint sets $\Lambda_\ell \subset \Z$, $\ell=1, \ldots, L$,
such that for every $J \subset \{ 1, \ldots , L \}$, the system $E( \cup_{\ell \in J} \, \Lambda_\ell )$ is a Riesz basis for $L^2 ( \cup_{\ell \in J} \, [a_\ell,b_\ell) )$.
\end{Conjecture}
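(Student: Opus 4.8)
The plan is to build the sets $\Lambda_\ell$ from a single arithmetic construction and to reduce the ``for every $J$'' requirement to a family of algebraic nondegeneracy conditions that primeness handles all at once. Write $c_\ell=b_\ell-a_\ell$, $S_J=\bigcup_{\ell\in J}[a_\ell,b_\ell)$ and $\alpha_J=|S_J|=\sum_{\ell\in J}c_\ell$. The first point is that the \emph{upper} Riesz bound costs nothing: since each $\Lambda_\ell\subset\Z$ and $E(\Z)$ is an orthonormal basis of $L^2[0,1)$, for $\Lambda_J=\bigcup_{\ell\in J}\Lambda_\ell$ and any finitely supported $(c_\lambda)$ one has $\big\|\sum_{\lambda\in\Lambda_J}c_\lambda e^{2\pi i\lambda x}\big\|_{L^2(S_J)}^2\le\big\|\sum_\lambda c_\lambda e^{2\pi i\lambda x}\big\|_{L^2[0,1)}^2=\sum_\lambda|c_\lambda|^2$, so $E(\Lambda_J)$ is automatically Bessel with bound $1$. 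Thus everything reduces to securing, \emph{simultaneously for all} $J$, a lower bound $\big\|\sum_{\lambda\in\Lambda_J}c_\lambda e^{2\pi i\lambda x}\big\|_{L^2(S_J)}^2\ge A\sum_\lambda|c_\lambda|^2$ with $A>0$ independent of $J$, together with completeness and the matching density $D(\Lambda_\ell)=c_\ell$.

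To see why working modulo a \emph{prime} delivers the whole hierarchy in one stroke, consider the model case in which the intervals are aligned to a prime grid. Fix a prime $p$, partition $[0,1)$ into cells $[j/p,(j+1)/p)$, and suppose $S_J=\bigcup_{j\in T_J}[j/p,(j+1)/p)$ for pairwise disjoint index sets $T_\ell\subset\Z_p$. Assign to each $\ell$ a set of frequency residues $\Omega_\ell\subset\Z_p$ with $|\Omega_\ell|=|T_\ell|$, pairwise disjoint, and put $\Lambda_\ell=\{n\in\Z:n\bmod p\in\Omega_\ell\}$. Writing $x=j/p+y$ with $y\in[0,1/p)$ and $n=mp+r$ with $r\in\Omega_J$, the factorization $e^{2\pi inx}=\omega^{rj}\,e^{2\pi i(mp+r)y}$ (with $\omega=e^{2\pi i/p}$), together with the fact that $\{e^{2\pi i(mp+r)y}:m\in\Z\}$ is an orthogonal basis of $L^2[0,1/p)$, yields after a short computation $\big\|\sum_{\lambda\in\Lambda_J}c_\lambda e^{2\pi i\lambda x}\big\|_{L^2(S_J)}^2=\int_0^{1/p}\|M_J h(y)\|_{\ell^2(T_J)}^2\,dy$, where $M_J=[\omega^{rj}]_{j\in T_J,\,r\in\Omega_J}$ and $\int_0^{1/p}\|h(y)\|^2\,dy=\tfrac1p\sum_\lambda|c_\lambda|^2$. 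Hence $E(\Lambda_J)$ is a Riesz basis of $L^2(S_J)$ precisely when the square minor $M_J$ is invertible, with lower Riesz bound $\sigma_{\min}(M_J)^2/p$. Since $p$ is prime, Chebotar\"ev's theorem makes \emph{every} square submatrix of the Fourier matrix nonsingular, so \emph{every} $M_J$, $J\subset\{1,\dots,L\}$, is invertible at once: deleting intervals merely passes to a smaller, still nonsingular, minor. This is the structural reason the pieces $\Lambda_\ell$ can be assigned disjointly while every sub-union inherits the basis property.

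In reality the endpoints are irrational, so neither hypothesis of the model holds: the grid densities $|\Omega_\ell|/p$ are rational and cannot equal $c_\ell$, and the true $S_J$ never coincides with a union of cells. This is where the linear-independence hypothesis and the remaining two ingredients enter. The assumption that $1,a_1,\dots,a_L,b_1,\dots,b_L$ are linearly independent over $\Q$ is exactly the Weyl criterion under which the Kronecker--Weyl theorem \emph{along the primes} guarantees that $(\{pa_\ell\},\{pb_\ell\})_\ell$ equidistributes in $\R^{2L}/\Z^{2L}$ as $p$ runs over the primes. I would use this to select primes (and, since a single scale cannot realize irrational densities, a superposition over a sequence of primes in the spirit of the key lemma of \cite{KN15}) for which the cell counts produce the correct densities and the grid approximations $\tilde S_J$ of $S_J$ are controlled closely enough that a Paley--Wiener-type stability argument transfers the Riesz basis property from the grid-aligned spaces to the genuine spaces $L^2(S_J)$, all while keeping the relevant minors of a prescribed, structured form whose smallest singular values stay comparable to $\sqrt p$.

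The principal obstacle is precisely this last quantitative point: Chebotar\"ev supplies nonvanishing determinants but no uniform control of $\sigma_{\min}(M_J)$, and because $M_J$ has size $\approx\alpha_J p$, the bound $\sigma_{\min}(M_J)^2/p$ stays away from $0$ only if the chosen residues make $M_J$ well conditioned uniformly in $p$ and in $J$. Producing such a well-conditioned, hierarchically compatible selection — the ``refinement of the key lemma of \cite{KN15} for primes'' — while matching the irrational densities exactly and absorbing the boundary error at the irrational endpoints, is the heart of the matter and the step I expect to be hardest. Once a single constant $A>0$ valid for all $2^L-1$ subsets is in hand, the reduction of the first paragraph closes the argument: each $E(\Lambda_J)$ is Bessel with bound $1$ and bounded below by $A$ on $L^2(S_J)$, while the square invertible minors render it complete, so it is a Riesz basis, as claimed.
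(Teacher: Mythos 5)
You should first note that the statement you were asked about is left \emph{open} in the paper: it is Conjecture~\ref{conj:full-hierarchy}, and the paper proves only Theorem~\ref{thm:hierarchy-for-finite-intervals-Q-linearly-indep}, which additionally assumes that $1,a_1,\ldots,a_L,b_1,\ldots,b_L$ are linearly independent over $\Q$ and that the intervals are strictly separated. Your argument silently reintroduces exactly that extra hypothesis (``The assumption that $1,a_1,\dots,a_L,b_1,\dots,b_L$ are linearly independent over $\Q$ is exactly the Weyl criterion\dots''), which the conjecture does not grant: if some endpoint is rational, say $a_1=0$, then $\{pa_1\}=0$ for every prime $p$, Kronecker--Weyl along the primes yields nothing, and no choice of prime produces the grid structure your reduction needs; touching intervals ($b_\ell=a_{\ell+1}$) are likewise excluded by your setup. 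So at best your outline targets Theorem~\ref{thm:hierarchy-for-finite-intervals-Q-linearly-indep}, not the stated conjecture. Moreover, even for that theorem your text is a strategy sketch rather than a proof: you yourself flag the decisive step --- a lower bound on $\sigma_{\min}(M_J)$, uniform in $p$ and in $J$, for minors of size $\approx\alpha_J p$ of the $p\times p$ Fourier matrix, together with exact realization of the irrational densities and absorption of the boundary error --- as unresolved. This is a genuine obstruction, not a routine verification: Chebotar\"ev gives nonvanishing determinants only, and square minors of the DFT matrix can be arbitrarily ill-conditioned as $p\to\infty$, so the proposed ``superposition over a sequence of primes'' has no known way to keep a single constant $A>0$.

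It is instructive to compare with how the paper actually proves Theorem~\ref{thm:hierarchy-for-finite-intervals-Q-linearly-indep}, because it sidesteps both of your hard points by fixing \emph{one} prime. Kronecker--Weyl along the primes (Proposition~\ref{prop:multi-dim-Vinogradov}) is used once, to choose a single prime $N$ with $0<\{Na_1\}<\cdots<\{Na_L\}<\{Nb_L\}<\cdots<\{Nb_1\}<1$; this nesting forces every set $A_{\geq n}(N,S^J)$, simultaneously for all $2^L$ index sets $J$, to be empty, $[0,\tfrac1N)$, or a single interval $\bigl[\tfrac{\{Na_\ell\}}{N},\tfrac{\{Nb_\ell\}}{N}\bigr)$. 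The irrational densities are then handled not by congruence classes but by Seip's theorem \cite{Se95}, which supplies subsets $\Lambda^{(n)}\subsetneq N\Z$ forming Riesz bases for these single intervals --- so there is no density-matching problem at all, and your premise that ``a single scale cannot realize irrational densities'' is mistaken for the construction actually used. Chebotar\"ev enters only inside Lemma~\ref{lem:KN15-Lemma2-variant-Nprime}, to invert $n\times n$ matrices $[e^{-2\pi i j_\ell k_r/N}]$ for the one fixed prime $N$; since only finitely many minors occur, the constant $c'$ exists by finiteness and no conditioning estimate is ever needed. The role of primeness is precisely to upgrade Kozma--Nitzan's Lemma~\ref{lem:KN15-Lemma2} from consecutive to \emph{arbitrary} shift factors $j_n$, which is what lets each sub-union $\cup_{\ell\in J}\Lambda_\ell$ inherit the basis property. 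If you wish to pursue the conjecture itself, the genuine difficulty is removing the $\Q$-independence (e.g.\ rational or coinciding endpoints), where the fractional-part separation \eqref{eqn:Naell-Nbell-intervals-nested} fails and neither your approach nor the paper's applies.
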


This conjecture generalizes Theorem \ref{thm:hierarchy-for-finite-intervals-Q-linearly-indep} by removing the $Q$-linear independence of the endpoints and by allowing for contiguous intervals in $[0,1)$, i.e., $b_\ell = a_{\ell+1}$ for some $\ell \in \{ 1,\ldots,L-1 \}$.
The conjecture can be easily reformulated as follows.

\begin{Conjectureprime}{conj:full-hierarchy}
\label{conj:full-hierarchy-prime}
Let $I_\ell$, $\ell=1, \ldots, L$, be intervals which form a partition of $[0,1)$.
There exists a partition $\Lambda_\ell$, $\ell=1, \ldots, L$, of $\Z$ such that for every $J \subset \{ 1, \ldots , L \}$, the system $E( \cup_{\ell \in J} \, \Lambda_\ell )$ is a Riesz basis for $L^2 ( \cup_{\ell \in J} \, I_\ell )$.
\end{Conjectureprime}

Indeed, Conjecture \ref{conj:full-hierarchy} obviously implies Conjecture \ref{conj:full-hierarchy-prime}, and the converse is seen by considering the partition of $[0,1)$ formed using the endpoints of all $[a_\ell,b_\ell)$.
Note that Conjecture \ref{conj:full-hierarchy-prime} generalizes the result of Pfander, Revay and Walnut \cite{PRW21} from consecutive index sets $J$ to arbitrary index sets $J$.

Lastly, we mention that both Problems~\ref{P1} and \ref{P2} remain open for more general classes of sets $S_\ell$, $S$ and $S'$.

\section{Preliminaries}
\label{sec:prelim}

\begin{definition}\label{def:HilbertSpace-Sequences}
A sequence $\{ f_n \}_{n\in\Z}$ in a separable Hilbert space $\mathcal{H}$ is called
\begin{itemize}
\item  a \emph{frame} for $\mathcal{H}$ (with frame bounds $A$ and $B$) if there are constants $0 < A \leq B < \infty$ such that
\[
A \, \| f \|^2
\;\leq\;
\sum_{n\in\Z} |\langle f , f_n\rangle|^{2}
\;\leq\;
B \, \| f \|^2
\quad \text{for all} \;\; f \in \mathcal{H};
\]

\item  a \emph{Riesz sequence} in $\mathcal{H}$ (with Riesz bounds $A$ and $B$) if there are constants $0 < A \leq B < \infty$ such that
\[
A \, \| c \|_{\ell_2}^2
\;\leq\;
\Big\| \sum_{n\in\Z} c_n \, f_n \Big\|^2
\;\leq\;
B \, \| c \|_{\ell_2}^2
\quad \text{for all} \;\; \{c_n\}_{n\in\Z} \in \ell_2 (\mathbb Z);
\]

\item  a \emph{Riesz basis} for $\mathcal{H}$ if it is a complete Riesz sequence in $\mathcal{H}$.
\end{itemize}
\end{definition}

It is well-known (see e.g., \cite[Proposition 3.7.3, Theorems 5.4.1 and 7.1.1]{Ch16} or \cite[Lemma 1]{KN15}) that a sequence in $\mathcal{H}$ is a Riesz basis if and only if it is both a frame and a Riesz sequence. Moreover in this case, the optimal frame bounds coincides with the optimal Riesz bounds.
It is worth noting that Riesz bases are equivalent to unconditional bases that are norm-bounded above and below \cite[Lemma 3.6.9]{Ch16}.
Since every exponential function has constant norm in $L^2(S)$ with $S \subset \R^d$,
namely $\| e^{2 \pi i \lambda \cdot (\cdot)} \|_{L^2(S)} = |S|^{1/2}$ for any $\lambda \in \R^d$, \emph{Riesz bases of exponentials} coincide with \emph{unconditional bases of  exponentials}.

\begin{proposition}[Proposition 2.1 in \cite{MM09}, Proposition 5.4 in \cite{BCMS19}]
\label{prop:Prop5-4-BCMS19}
Let $\{ e_n \}_{n \in I}$ be an orthonormal basis of a separable Hilbert space $\mathcal{H}$, where $I$ is a countable index set.
Let $P : \mathcal{H} \rightarrow \mathcal{M}$ be the orthogonal projection from $\mathcal{H}$ onto a closed subspace $\mathcal{M}$.
Let $J \subset I$, $J^c := I \backslash J$, and $0 < \alpha \leq 1$. The following are equivalent.
\begin{itemize}
\item[$\mathrm{(\romannumeral 1)}$] $\{ P e_n \}_{n \in J} \subset \mathcal{M}$ is a frame for $\mathcal{M}$ with optimal lower bound $\alpha$.

\item[$\mathrm{(\romannumeral 2)}$] $\{ P e_n \}_{n \in J^c} \subset \mathcal{M}$ is a Bessel sequence with optimal bound $1-\alpha$.

\item[$\mathrm{(\romannumeral 3)}$] $\{ (\mathrm{Id}-P) e_n \}_{n \in J^c} \subset \mathcal{M}^{\perp}$ is a Riesz sequence with optimal lower bound $\alpha$.
\end{itemize}
\end{proposition}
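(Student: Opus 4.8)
The plan is to derive all three statements from a single orthogonality identity and then pass between the analysis and synthesis formulations by taking adjoints. Throughout, write $Q := \mathrm{Id} - P$ for the orthogonal projection onto $\mathcal{M}^{\perp}$, and recall $P = P^*$. The starting observation is that for every $f \in \mathcal{M}$ one has $\langle f, P e_n\rangle = \langle P f, e_n\rangle = \langle f, e_n\rangle$, so Parseval's identity for the orthonormal basis $\{e_n\}_{n\in I}$ yields the key identity
\[
\sum_{n \in J} |\langle f, P e_n\rangle|^2 + \sum_{n \in J^c} |\langle f, P e_n\rangle|^2 = \sum_{n\in I} |\langle f, e_n\rangle|^2 = \|f\|^2 \qquad (f \in \mathcal{M}).
\]
In particular each summand is at most $\|f\|^2$, so the upper (Bessel) bound in (i) and the bound in (ii) are automatically $\leq 1$; only the \emph{values} of the optimal constants are at stake.

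For (i)$\Leftrightarrow$(ii) I would restrict $f$ to the unit sphere of $\mathcal{M}$ and take extrema in the identity above. The optimal lower frame bound of $\{P e_n\}_{n\in J}$ is $\inf_{\|f\|=1}\sum_{n\in J}|\langle f, P e_n\rangle|^2$, while the optimal Bessel bound of $\{P e_n\}_{n\in J^c}$ is $\sup_{\|f\|=1}\sum_{n\in J^c}|\langle f, P e_n\rangle|^2$; since the two sums add to $1$ on the unit sphere, the supremum equals $1$ minus the infimum. Hence the infimum equals $\alpha$ (which, being positive, makes $\{P e_n\}_{n\in J}$ a genuine frame) if and only if the supremum equals $1-\alpha$, which is precisely (i)$\Leftrightarrow$(ii).

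For (ii)$\Leftrightarrow$(iii) I would exploit that $\{e_n\}_{n\in J^c}$ is itself orthonormal. Let $T : \ell^2(J^c)\to \mathcal{H}$, $Tc = \sum_{n\in J^c} c_n e_n$, be the associated isometry. Since $\sum_{n\in J^c} c_n\, Q e_n = Q(Tc)$ and $\|Tc\|^2 = \|P(Tc)\|^2 + \|Q(Tc)\|^2 = \|c\|^2$, one obtains for every $c \in \ell^2(J^c)$
\[
\Big\| \sum_{n\in J^c} c_n\, Q e_n \Big\|^2 = \|c\|^2 - \Big\| \sum_{n\in J^c} c_n\, P e_n \Big\|^2 .
\]
The subtracted term is $\|Sc\|^2$, where $S$ is the synthesis operator of $\{P e_n\}_{n\in J^c}$; its adjoint $S^*$ is the analysis operator $f\mapsto (\langle f, P e_n\rangle)_{n\in J^c}$, so the optimal Bessel bound of $\{P e_n\}_{n\in J^c}$ equals $\|S^*\|^2 = \|S\|^2 = \sup_{\|c\|=1}\|Sc\|^2$. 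Reading the displayed identity on the unit sphere of $\ell^2(J^c)$ then shows that the optimal lower Riesz bound of $\{Q e_n\}_{n\in J^c}$ equals $1 - \|S\|^2$, that is, $1$ minus the optimal Bessel bound of $\{P e_n\}_{n\in J^c}$, the upper Riesz bound being automatically $\leq 1$ by the same identity. Thus the optimal Bessel bound equals $1-\alpha$ exactly when the optimal lower Riesz bound equals $\alpha$, giving (ii)$\Leftrightarrow$(iii).

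The computations are elementary; the only point needing care is the bookkeeping of \emph{optimal} constants, and in particular the identification of the optimal Bessel bound of a family with the squared norm of its synthesis operator through the analysis--synthesis adjoint relation. Once that duality is invoked, the three optimal-bound conditions are linked by the two Pythagorean identities above, and no further estimates are required.
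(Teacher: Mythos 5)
Your argument is correct and complete. Note, however, that the paper contains no proof of this proposition to compare against: it is imported as a known result from Matei--Meyer (Proposition 2.1 in \cite{MM09}) and Bownik--Casazza--Marcus--Speegle (Proposition 5.4 in \cite{BCMS19}). Measured against the proofs in those references, yours follows the standard route and is essentially the same: the Parseval identity $\sum_{n\in J}|\langle f,Pe_n\rangle|^2+\sum_{n\in J^c}|\langle f,Pe_n\rangle|^2=\|f\|^2$ for $f\in\mathcal{M}$ yields (i)$\Leftrightarrow$(ii) by complementing an infimum against a supremum on the unit sphere of $\mathcal{M}$, while the Pythagorean identity $\big\|\sum_{n\in J^c}c_n(\mathrm{Id}-P)e_n\big\|^2=\|c\|^2-\big\|\sum_{n\in J^c}c_nPe_n\big\|^2$, combined with $\|S\|=\|S^*\|$ for the synthesis/analysis pair of $\{Pe_n\}_{n\in J^c}$, yields (ii)$\Leftrightarrow$(iii). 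You also handle the only delicate points correctly: the upper frame, Bessel, and Riesz bounds are automatically at most $1$ by these identities; the synthesis operator is a priori bounded (norm at most $1$) by the same identity, so invoking the adjoint relation and identifying the optimal Bessel bound with $\|S\|^2$ is legitimate; and the hypothesis $\alpha>0$ is what upgrades the equalities of optimal constants to the genuine frame and Riesz-sequence assertions in (i) and (iii). Nothing is missing, and transitivity through (ii) gives the full three-way equivalence as stated.
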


As a direct consequence of Proposition \ref{prop:Prop5-4-BCMS19}, we have that for a set $\Lambda \subset \Z$ and a measurable set $S \subset [0,1)$, the system $E(\Lambda)$ is a frame for $L^2(S)$ if and only if $E(\Z \backslash \Lambda)$ is a Riesz sequence in $L^2 ( [0,1)\backslash S )$.

\begin{lemma}\label{lem:RB-basic-operations}
Assume that $E(\Lambda)$ is a Riesz basis for $L^2(S)$ with bounds $0 < A \leq B < \infty$, where $\Lambda \subset \R^d$ is a discrete set and $S \subset \R^d$ is a measurable set. Then the following hold. \\
(a) For any $a,b \in \R^d$, the system $E(\Lambda+a)$ is a Riesz basis for $L^2(S+b)$ with bounds $A$ and $B$. \\
(b) For any $c > 0$, the system $E(c \Lambda)$ is a Riesz basis for $L^2(\frac{1}{c} S)$ with bounds $\frac{A}{c}$ and $\frac{B}{c}$.
\end{lemma}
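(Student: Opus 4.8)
The plan is to realize each operation as a (possibly scalar multiple of a) unitary operator carrying exponentials to exponentials with the transformed frequencies, and then to invoke two elementary stability facts. First, a unitary operator $U \colon \mathcal{H} \to \mathcal{K}$ maps a Riesz basis of $\mathcal{H}$ with bounds $A,B$ to a Riesz basis of $\mathcal{K}$ with the same bounds, since $\|Ug\|_{\mathcal{K}} = \|g\|_{\mathcal{H}}$ preserves the two-sided Riesz inequality and $U$ preserves completeness. Second, replacing each element $f_\lambda$ of a Riesz basis by $\omega_\lambda f_\lambda$ with $|\omega_\lambda| = 1$ changes neither the bounds nor completeness, because the unimodular factors can be absorbed into the coefficient sequence without altering its $\ell^2$-norm; applying a single positive scalar $s>0$ uniformly rescales the bounds by $s^2$.

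For part (a), I would first apply the modulation operator $M_a \colon f \mapsto e^{2\pi i a \cdot x} f$, which is unitary on $L^2(S)$ and satisfies $M_a e^{2\pi i \lambda \cdot x} = e^{2\pi i (\lambda+a)\cdot x}$; hence $E(\Lambda+a)$ is a Riesz basis for $L^2(S)$ with bounds $A,B$. I would then apply the translation $T_b \colon f \mapsto f(\,\cdot\, - b)$, which is unitary from $L^2(S)$ onto $L^2(S+b)$ and sends $e^{2\pi i \mu \cdot x}$ to $e^{-2\pi i \mu \cdot b}\, e^{2\pi i \mu \cdot x}$. Applied to $E(\Lambda+a)$ this yields $E(\Lambda+a)$ multiplied elementwise by the unimodular constants $e^{-2\pi i (\lambda+a)\cdot b}$, so by the stability facts it is a Riesz basis for $L^2(S+b)$ with unchanged bounds $A,B$, proving (a).

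For part (b), I would use the dilation $D_c \colon f \mapsto c^{d/2} f(c\,\cdot\,)$, which a change of variables shows to be unitary from $L^2(S)$ onto $L^2(\tfrac1c S)$ and which satisfies $D_c e^{2\pi i \lambda \cdot x} = c^{d/2} e^{2\pi i (c\lambda)\cdot x}$. Thus $c^{d/2} E(c\Lambda)$ is a Riesz basis for $L^2(\tfrac1c S)$ with bounds $A,B$, and dividing out the uniform factor $c^{d/2}$ rescales the bounds by $c^{-d}$; in the one-dimensional setting $d=1$ this gives exactly $A/c$ and $B/c$ as stated. Alternatively, one can bypass operators and argue directly from the definitions: substituting $y = x-b$ in (a) and $y = cx$ in (b) inside $\int |\sum_\lambda \gamma_\lambda e^{2\pi i \mu_\lambda \cdot x}|^2 \, dx$ reduces each Riesz inequality to the known one for $E(\Lambda)$ on $L^2(S)$, while completeness transfers because each of these maps is a bijection between the two $L^2$-spaces.

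I expect no genuine obstacle, as the statement is routine. The only points requiring care are the bookkeeping of the constant in (b) — namely that the dilation Jacobian contributes the factor $c^{-d}$ (equal to $c^{-1}$ in the relevant case) — and the verification in (a) that premultiplying the basis elements by unimodular scalars preserves the bounds and completeness, not merely the closed span.
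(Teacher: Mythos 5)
Your proof is correct. The paper itself does not prove this lemma (it defers to the reference \cite{Le21}), and your argument via the unitary modulation, translation, and dilation operators, together with the observation that unimodular prefactors can be absorbed into the coefficient sequence, is the standard and complete way to establish it. Your bookkeeping in part (b) is also right to flag that the dilation actually rescales the bounds by $c^{-d}$, so the stated constants $A/c$ and $B/c$ are literally accurate only for $d=1$ --- which is the only case the paper uses.
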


Lemma \ref{lem:RB-basic-operations} remains valid if all the terms ``Riesz basis'' are replaced by ``Riesz sequence'' or by ``frame''.
A proof of Lemma \ref{lem:RB-basic-operations} can be found in \cite{Le21}.

For any $N \in \N$, a measurable set $S \subset [0,1)$, and $n = 1, \ldots, N$, we define
\begin{equation}\label{eqn:A-geq-n}
\begin{split}
A_{\geq n} &= A_{\geq n} (N,S)
:= \Big\{ t \in [0, \tfrac{1}{N}) : t+ \tfrac{k}{N} \in S \;\; \text{for at least $n$ values} \\
& \qquad\qquad\qquad\qquad\qquad\qquad\quad\;\; \text{of} \;\; k \in \{ 0 , 1, \ldots, N-1 \} \Big\}  .
\end{split}
\end{equation}

\begin{lemma}[Lemma 2 in \cite{KN15}]
\label{lem:KN15-Lemma2}
Let $N \in \N$ and let $S \subset [0,1)$ be a measurable set.
If there exist sets $\Lambda_1, \ldots, \Lambda_N \subset N\Z$ such that $E(\Lambda_n)$ is a Riesz basis for $L^2(A_{\geq n})$, then $E( \cup_{n=1}^N (\Lambda_n {+} n) )$ is a Riesz basis for $L^2 (S)$.
\end{lemma}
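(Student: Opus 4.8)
The plan is to show that $E(\Lambda)$, with $\Lambda:=\bigcup_{n=1}^N(\Lambda_n+n)$, is simultaneously a frame and a Riesz sequence for $L^2(S)$; by the characterization recalled after Definition~\ref{def:HilbertSpace-Sequences} this is precisely a Riesz basis. Since $\Lambda_n\subset N\Z$ and the residues $1,\dots,N$ are pairwise distinct modulo $N$, the sets $\Lambda_n+n$ are pairwise disjoint and $\Lambda\subset\Z$. The upper bound is then automatic: as $E(\Z)$ is an orthonormal basis of $L^2[0,1)$, the family $\{e^{2\pi imx}|_S\}_{m\in\Z}$ is a Parseval frame for $L^2(S)$ (the orthogonal projection of an orthonormal basis), so any subfamily, in particular $E(\Lambda)$, is Bessel in $L^2(S)$ with bound $1$. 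Hence only the two lower bounds remain.

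I would first use Proposition~\ref{prop:Prop5-4-BCMS19} to halve the work. Writing $S^c:=[0,1)\setminus S$ and $\Lambda^c:=\Z\setminus\Lambda=\bigcup_{n=1}^N\big((N\Z\setminus\Lambda_n)+n\big)$, the proposition (with $\mathcal H=L^2[0,1)$, orthonormal basis $E(\Z)$, and $P$ multiplication by $\mathbf 1_S$) gives that $E(\Lambda)$ is a frame for $L^2(S)$ with lower bound $\alpha$ iff $E(\Lambda^c)$ is a Riesz sequence in $L^2(S^c)$ with lower bound $\alpha$, and, exchanging the roles of $S$ and $S^c$, that $E(\Lambda)$ is a Riesz sequence in $L^2(S)$ iff $E(\Lambda^c)$ is a frame for $L^2(S^c)$. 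Applying the same proposition on $L^2[0,\tfrac1N)$ to each pair $(\Lambda_n,A_{\geq n})$ turns the hypothesis ``$E(\Lambda_n)$ is a Riesz basis for $L^2(A_{\geq n})$'' into ``$E(N\Z\setminus\Lambda_n)$ is a Riesz basis for $L^2\big([0,\tfrac1N)\setminus A_{\geq n}\big)$'', and one checks $[0,\tfrac1N)\setminus A_{\geq n}(S)=A_{\geq N+1-n}(S^c)$. Thus $S^c$ with frequencies $\Lambda^c$ obeys hypotheses of exactly the same shape (the $N$ translates merely relabelled), so it suffices to establish the frame lower bound of the lemma for general such data; the Riesz-sequence half then follows by applying that result to $S^c$.

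For the frame lower bound I would pass to a fiber/Zak reformulation. Fix $f\in L^2(S)$, extend it by $0$, slice $[0,1)$ into the intervals $[\tfrac kN,\tfrac{k+1}N)$, and write $x=t+\tfrac kN$ with $t\in[0,\tfrac1N)$. Using $e^{2\pi iNj\cdot k/N}=1$, a direct computation gives for every $n$ and every $Nj\in N\Z$
\[
\langle f, e^{2\pi i(Nj+n)(\cdot)}\rangle_{L^2(S)}=\langle \tilde G_n, e^{2\pi i Nj(\cdot)}\rangle_{L^2[0,1/N)},\qquad \tilde G_n(t):=e^{-2\pi i nt}\!\!\sum_{k:\,t+k/N\in S}\!\! f(t+\tfrac kN)\,e^{-2\pi i nk/N}.
\]
Since $\{e^{2\pi iNj(\cdot)}\}_{j\in\Z}$ is an orthogonal basis of $L^2[0,\tfrac1N)$ and $f\mapsto(\tilde G_n)_n$ is the fiberwise discrete Fourier transform up to the factor $\sqrt N$, Plancherel on each fiber yields $\sum_{n=1}^N\|\tilde G_n\|_{L^2[0,1/N)}^2=N\|f\|_{L^2(S)}^2$ (re-proving the Bessel bound $1$). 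The frame sum equals $\sum_{n=1}^N\sum_{Nj\in\Lambda_n}|\langle \tilde G_n, e^{2\pi iNj(\cdot)}\rangle_{L^2[0,1/N)}|^2$, so the lower bound asks this to be $\gtrsim\sum_n\|\tilde G_n\|^2$. The key structural fact is that $f$ is supported on $S$: at a fiber $t$ of multiplicity $m(t)$ (so $t\in A_{\geq n}$ exactly for $n\le m(t)$) the vector $(\tilde G_n(t))_{n=1}^N$ is constrained to lie in the $m(t)$-dimensional subspace of $\mathbb C^N$ spanned by the DFT columns indexed by $\{k:t+\tfrac kN\in S\}$.

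The step I expect to be hardest is extracting a \emph{uniform} positive lower bound from this constraint. For each $n$ the hypothesis supplies, via Proposition~\ref{prop:Prop5-4-BCMS19}, both that $E(\Lambda_n)$ is a Riesz basis for $L^2(A_{\geq n})$ and that $E(N\Z\setminus\Lambda_n)$ is a Riesz basis for $L^2((A_{\geq n})^c)$. Splitting $\tilde G_n=\tilde G_n\mathbf 1_{A_{\geq n}}+\tilde G_n\mathbf 1_{(A_{\geq n})^c}$, the first piece is captured with a uniform lower bound by the $\Lambda_n$-sum, but the second piece — the ``overflow'' from fibers of multiplicity exceeding $n$ — is essentially invisible to $E(\Lambda_n)$, its energy being carried instead by the complementary frequencies. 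The crux is that these overflow pieces cannot hide from all of the sums simultaneously: the fiber constraint ties the components $\tilde G_n(t)$ with $n>m(t)$ to those with $n\le m(t)$, so overflow energy is forced into lower-index components that \emph{are} captured. Quantifying this requires a fiberwise nondegeneracy estimate for the DFT submatrices attached to the sets $\{k:t+\tfrac kN\in S\}$, integrated against the completeness provided by the Riesz-basis hypotheses. Equivalently, via the Parseval identity $\sum_{\lambda\in\Lambda}|\langle f,e_\lambda\rangle_S|^2=\|f\|_S^2-\sum_{\mu\in\Lambda^c}|\langle f,e_\mu\rangle_S|^2$, the goal is exactly that $E(\Lambda^c)$ is Bessel in $L^2(S)$ with bound \emph{strictly} below $1$; pinning down this uniform strict inequality — not merely completeness — is where the Riesz-basis (rather than completeness) hypothesis on each $A_{\geq n}$ is indispensable.
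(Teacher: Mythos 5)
Your setup is sound and matches the standard approach: the Bessel bound from $\Lambda\subset\Z$, the duality via Proposition~\ref{prop:Prop5-4-BCMS19} reducing the Riesz-sequence half to the frame half for $S^c$ (your identity $[0,\tfrac1N)\setminus A_{\geq n}(S)=A_{\geq N+1-n}(S^c)$ is correct, though note the complementary data carries the \emph{reversed} shifts $N+1-m$ rather than $m$, so ``exactly the same shape'' needs the remark that a reversed block of consecutive residues is still a block of consecutive residues mod $N$), and the fiberization $\langle f,e^{2\pi i(Nj+n)\cdot}\rangle_{L^2(S)}=\langle\tilde G_n,e^{2\pi iNj\cdot}\rangle_{L^2[0,1/N)}$. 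But the proof stops exactly at the point where the proof has to happen: you state that extracting a uniform lower bound from the fiber constraint ``requires a fiberwise nondegeneracy estimate \ldots integrated against the completeness provided by the Riesz-basis hypotheses'' without supplying that argument. As written, this is a correct reduction plus an honest description of the remaining difficulty, not a proof.

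The missing idea is the stratification of $S$ by \emph{exact} multiplicity. Let $A_n\subset[0,\tfrac1N)$ be the fibers of multiplicity exactly $n$, let $B_n\subset S$ be their $n$-fold preimage, and write $f=f_1+\cdots+f_N$ with $f_n:=f\mathds{1}_{B_n}$ and $f_{\geq n}:=\sum_{\ell\geq n}f_\ell$. One does not try to capture each $\tilde G_n$ (built from the full $f$) by $E(\Lambda_n+n)$ alone; instead one proves the triangular estimate $\sum_{\lambda\in\Lambda}|\langle f_{\geq n},e^{2\pi i\lambda\cdot}\rangle|^2\geq c\,\|f_n\|^2$ for each $n$, which suffices by telescoping. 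The point is that when the input is $f_{\geq n}$ (supported where the multiplicity is at least $n$), the fiber functions $h_{n,\ell}$ obtained by testing against $\Lambda_\ell+\ell$ for $\ell=1,\dots,n$ are automatically supported in $A_{\geq n}\subset A_{\geq\ell}$, so the frame hypothesis on $L^2(A_{\geq\ell})$ applies with no ``overflow'' term at all --- the obstruction you describe is an artifact of decomposing $\tilde G_n$ rather than $f$. This yields $\sum_{\lambda\in\Lambda}|\langle f_{\geq n},e^{2\pi i\lambda\cdot}\rangle|^2\geq(\min_{\ell\leq n}\alpha_\ell)\sum_{\ell=1}^n\|h_{n,\ell}\mathds{1}_{A_n}\|^2$, and on $A_n$ the vector $(h_{n,\ell}(t))_{\ell=1}^n$ is obtained from the $n$ nonzero fiber values of $f_{\geq n}$ by the $n\times n$ minor $[e^{-2\pi i\ell k_r/N}]_{1\leq\ell\leq n,\,1\leq r\leq n}$ of the DFT matrix with \emph{consecutive} row indices --- a Vandermonde matrix in the variables $e^{-2\pi ik_r/N}$, hence invertible for every $N$ (this is where the consecutiveness of the shifts $1,\dots,N$ is used; for arbitrary shifts one needs $N$ prime and Chebotar\"ev's theorem, which is the content of Lemma~\ref{lem:KN15-Lemma2-variant-Nprime}). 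Finiteness of the choices of $k_1<\cdots<k_n$ gives a uniform constant, and integrating over $A_n$ produces $c\,\|f_n\|^2$. Without the $B_n$/$f_{\geq n}$ decomposition and the Vandermonde invertibility of these consecutive-row minors, the ``uniform strict inequality'' you flag cannot be pinned down, so the proposal has a genuine gap at its core step.
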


This lemma, which plays a central role in \cite{KN15}, combines Riesz bases by introducing \emph{consecutive} shift factors $n$ to the frequency sets $\Lambda_n$ and then taking their union $\cup_{n=1}^N (\Lambda_n {+} n)$.
For our purpose, we generalize the lemma to allow for \emph{arbitrary} shift factors when $N$ is prime.

\begin{lemma}\label{lem:KN15-Lemma2-variant-Nprime}
Let $N \in \N$ be a prime and let $S \subset [0,1)$ be a measurable set.
If there exist sets $\Lambda_1, \ldots, \Lambda_N \subset N\Z$ such that $E(\Lambda_n)$ is a Riesz basis (resp.~a frame, a Riesz sequence) for $L^2(A_{\geq n})$, then for every permutation $\{ j_n \}_{n=1}^N$ of $\{ 1, \ldots, N \}$ the system $E( \cup_{n=1}^N (\Lambda_n {+} j_n) )$ is a Riesz basis (resp.~a frame, a Riesz sequence) for $L^2 (S)$.
\end{lemma}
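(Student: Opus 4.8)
The plan is to follow the strategy behind Lemma~\ref{lem:KN15-Lemma2} and isolate the single place where the consecutiveness of the shifts $1,\ldots,N$ is used, replacing it by an argument valid for an arbitrary permutation precisely when $N$ is prime.

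First I would set up the fiber decomposition of $[0,1)$ into the $N$ subintervals $[\tfrac{k}{N},\tfrac{k+1}{N})$, $k=0,\ldots,N-1$, writing each $x$ as $x=t+\tfrac{k}{N}$ with $t\in[0,\tfrac1N)$ and $k\in\{0,\ldots,N-1\}$. For $f$ on $S$ (extended by zero), put $K(t):=\{k : t+\tfrac{k}{N}\in S\}$ and $F(t):=(f(t+\tfrac{k}{N}))_{k=0}^{N-1}$, supported on $K(t)$. Since $\Lambda_n\subset N\Z$, writing $\lambda=Nm$ and $\omega:=e^{2\pi i/N}$, the identity $e^{2\pi i(Nm+j_n)(t+k/N)}=\omega^{\,j_n k}\,e^{2\pi i(Nm+j_n)t}$ factors every inner product (and every synthesis sum) into a ``DFT part'' $\omega^{\,j_n k}$ acting on the fiber index $k$ and a ``slow part'' that is a genuine integer-frequency exponential in the rescaled variable $s=Nt\in[0,1)$. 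Because these rescaled frequencies are integers, the exponentials $\{e^{2\pi i m s}\}_{m\in\Z}$ are orthonormal on $[0,1)$; this is the point that lets me control the \emph{full} energy of each partial sum by $\sum_m|c_{n,m}|^2$ and, combined with Lemma~\ref{lem:RB-basic-operations}(b) and the hypothesis on $E(\Lambda_n)$, to compare it with the energy over $A_{\geq n}$.

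With this reduction the statement collapses to a pointwise finite-dimensional question on the fibers, assembled exactly as in the proof of Lemma~\ref{lem:KN15-Lemma2}. By the nesting $A_{\geq 1}\supseteq\cdots\supseteq A_{\geq N}$, a point $t$ with $|K(t)|=r$ lies in $A_{\geq n}$ exactly for $n\le r$, so the only active frequency levels over $t$ are $n=1,\ldots,r$, carrying the shifts $j_1,\ldots,j_r$. The upper (Bessel, resp.\ Riesz-sequence-upper) bound is the easy half: the rows of the DFT matrix are orthogonal of norm $\sqrt N$, so the relevant $r\times N$ coefficient matrix has operator norm $\le\sqrt N$, and integrating yields the upper bound with the constant absorbed into the bounds of the $\Lambda_n$. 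The lower bound is where everything happens: it reduces to showing that for a.e.\ $t$ the square matrix $\big[\omega^{\,j_n k}\big]_{k\in K(t),\,1\le n\le r}$ --- an arbitrary $r\times r$ submatrix of the $N\times N$ DFT matrix, with rows indexed by $K(t)$ and columns by the active shifts $j_1,\ldots,j_r$ --- is invertible, with least singular value bounded below uniformly in $t$.

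This invertibility is the main obstacle and the exact point where primality enters. For the consecutive shifts $j_n=n$ of Lemma~\ref{lem:KN15-Lemma2} the submatrix is, after factoring the diagonal $\mathrm{diag}(\omega^{k})_{k\in K(t)}$, a Vandermonde matrix in the distinct nodes $\{\omega^{k}\}_{k\in K(t)}$, hence automatically nonsingular and no primality is needed. For an arbitrary permutation $\{j_n\}$ the columns $j_1,\ldots,j_r$ are arbitrary, so the block is a completely general square submatrix of the DFT matrix, and here I would invoke Chebotar\"ev's theorem: when $N$ is prime, \emph{every} square submatrix of the $N\times N$ DFT matrix is nonsingular. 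Since there are only finitely many choices of the row set $K(t)$ and of the $r$ active columns, the minimum over all of them of the least singular value is a positive constant, giving the required uniform lower bound. The three variants then follow in parallel: the upper estimate uses only the Bessel/upper bounds of the $E(\Lambda_n)$, the lower estimate combines the invertibility above with their lower bounds, and the Riesz-basis case is the conjunction of the two, since a Riesz basis is exactly a frame that is simultaneously a Riesz sequence.
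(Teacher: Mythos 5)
Your frame-direction argument is essentially the paper's: periodize onto the fibers $t+\tfrac{k}{N}$, reduce (via the telescoping with $f_{\geq n}$, which you correctly defer to the proof of Lemma~\ref{lem:KN15-Lemma2}) to inverting, for $t\in A_n$, the $n\times n$ matrix $[e^{-2\pi i j_\ell k_r/N}]$, and invoke Chebotar\"ev's theorem for prime $N$ together with the finiteness of the set of possible minors to get a uniform constant. That is exactly the mechanism in the paper, and it is the right place where primality enters.

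The gap is in your claim that the Riesz-sequence lower bound ``follows in parallel'' from the same pointwise invertibility. In the synthesis direction one writes $\sum_\lambda c_\lambda e^{2\pi i\lambda x}=\sum_{n=1}^N e^{2\pi i j_n x} g_n(x)$ with $g_n$ a $\tfrac1N$-periodic function built from the coefficients over $\Lambda_n$, and on a fiber over $t\in A_r$ one faces the $r\times N$ system $V_{K(t)}\,G(t)$, where $G(t)=(e^{2\pi i j_n t}g_n(t))_{n=1}^N$: \emph{all} $N$ levels contribute at every point, there is no analogue of the truncation to $\ell\le n$ that the support of $h_{n,\ell}$ provides in the analysis direction, and for $r<N$ this rectangular system has a nontrivial kernel whose vectors can have nonzero entries among the first $r$ coordinates. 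So the pointwise bound $\|V_{K(t)}G\|^2\gtrsim\sum_{n\le r}|G_n|^2$ that your argument needs is false, and you cannot lower-bound $\|g_n\|_{L^2(A_{\geq n})}$ this way. The paper avoids this entirely: it deduces the Riesz-sequence statement from the already-proved frame statement by duality (Proposition~\ref{prop:Prop5-4-BCMS19}), applying the frame part to the complementary set $S'=[0,1)\setminus S$ with the complementary spectra $N\Z\setminus\Lambda_{N+1-n}$, using that $A'_{\geq n}=[0,\tfrac1N)\setminus A_{\geq N+1-n}$. You need this (or some equivalent) additional idea; without it the Riesz-sequence and hence the Riesz-basis cases are not established.
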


See \ref{appendix:proof-or-KN15-Lemma2-variant-Nprime} for a proof of Lemma \ref{lem:KN15-Lemma2-variant-Nprime}.

We will use the following notation throughout the proofs.
For $x \in \R$, we denote the fractional part of $x$ by $\{ x \}$, that is, $0 \leq \{ x \} := x - \lfloor x \rfloor < 1$, where $\lfloor x \rfloor$ is the greatest integer less than or equal to $x$.
Also, we adopt the convention that $[x,y) = \emptyset$ if $x = y \in \R$.

For the proof of Theorem \ref{thm:hierarchy-for-finite-intervals-Q-linearly-indep}, we will need the following version of the Kronecker--Weyl equidistribution theorem (see e.g., \cite[Theorem 443]{HW08} or \cite[p.48, Theorem 6.3 and Example 6.1]{KN74}) along the \emph{primes}. The one-dimensional case ($d=1$) was proved by Vinogradov \cite{Vi37} (see also \cite[p.22]{KN74}): if $a$ is an \emph{irrational} number, the sequence $\{ 2 a \}, \{ 3 a \}, \{ 5 a \}, \{ 7 a \}, \ldots$ is \emph{uniformly distributed} in $[0,1)$, meaning that for every interval $I \subset [0,1)$, the ratio of the numbers $\{ p a \}$ with prime $p \leq N$ that are contained in $I$, tends to $|I|$ as $N \rightarrow \infty$.
The notion of \emph{uniform distribution} is defined similarly in higher dimensions, see e.g., \cite[p.47, Definition 6.1]{KN74}.
As we could not find any reference for the multi-dimensional case, we include a short proof here.

\begin{proposition}[Kronecker--Weyl equidistribution along the primes]
\label{prop:multi-dim-Vinogradov}
Let $d \in \N$ and $a_1, \ldots, a_d \in \R$.
If the numbers $1, a_1, \ldots, a_d$ are linearly independent over $\Q$, which means that having $q + q_1 a_1 + \ldots + q_d \, a_d = 0$ for some $q,q_1,\ldots,q_d \in \Q$ implies $q = q_1 = \ldots = q_d = 0$, then the $d$-dimensional vectors
\[
\big( \{ p \, a_1 \} , \ldots, \{ p \, a_d \}  \big)
\quad \text{for} \;\; p \in \mathcal{P}
\]
are uniformly distributed in $[0,1)^d$, where $\mathcal{P} = \{ 2,3,5,7, \ldots \}$ is the set of primes.
\end{proposition}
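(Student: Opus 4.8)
The plan is to derive the $d$-dimensional statement from Vinogradov's one-dimensional result by means of the Weyl criterion. Recall that the sequence of vectors $x_p := \big( \{ p a_1 \}, \ldots, \{ p a_d \} \big)$, $p \in \mathcal{P}$ (enumerated along the increasing primes), is uniformly distributed in $[0,1)^d$ if and only if, for every nonzero $m = (m_1, \ldots, m_d) \in \Z^d$, the exponential averages
\[
\frac{1}{\#\{ p \in \mathcal{P} : p \leq N \}} \sum_{\substack{p \in \mathcal{P} \\ p \leq N}} e^{2 \pi i \langle m, x_p \rangle}
\]
tend to $0$ as $N \to \infty$. Thus it suffices to show that this Weyl sum vanishes for each fixed nonzero integer vector $m$.

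First I would exploit the integrality of $m$ to collapse the $d$-dimensional character into a single one-dimensional frequency. Since each $m_j \in \Z$, we have $e^{2 \pi i m_j \{ p a_j \}} = e^{2 \pi i m_j p a_j}$, because $\{ p a_j \}$ and $p a_j$ differ by the integer $\lfloor p a_j \rfloor$. Multiplying over $j = 1, \ldots, d$ gives
\[
e^{2 \pi i \langle m, x_p \rangle} = e^{2 \pi i p (m_1 a_1 + \cdots + m_d a_d)} = e^{2 \pi i p \alpha},
\]
where $\alpha := m_1 a_1 + \cdots + m_d a_d$. Hence the $d$-dimensional Weyl sum is literally a one-dimensional Weyl sum attached to the single frequency $\alpha$.

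The hypothesis enters precisely here. Because $1, a_1, \ldots, a_d$ are linearly independent over $\Q$ and $m \neq 0$, the number $\alpha$ is irrational: were $\alpha$ some rational $q$, then $q - m_1 a_1 - \cdots - m_d a_d = 0$ would be a nontrivial rational relation among $1, a_1, \ldots, a_d$, a contradiction. Invoking Vinogradov's theorem for the irrational $\alpha$, the sequence $\{ p \alpha \}$, $p \in \mathcal{P}$, is uniformly distributed in $[0,1)$; by the one-dimensional Weyl criterion applied with frequency $k = 1$, this is equivalent to $\tfrac{1}{\#\{ p \leq N \}} \sum_{p \leq N} e^{2 \pi i p \alpha} \to 0$. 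Substituting back, every Weyl sum in the $d$-dimensional criterion vanishes, and the desired equidistribution follows.

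I do not expect a serious obstacle here; the two points deserving care are purely bookkeeping. One is that the prime-indexed Weyl criterion is the genuine criterion applied to the enumeration $p_1 < p_2 < \cdots$ of $\mathcal{P}$, so that the normalization $\#\{ p \leq N \}$ matches the partial counts $M = \#\{ p \leq N \}$, which range over all positive integers. The other is the legitimacy of deleting the fractional parts inside the exponential, which holds exactly because the test vectors $m$ are integral. The mathematical heart is the single identity reducing the multidimensional character sum to a one-dimensional Vinogradov sum, after which the irrationality guaranteed by the $\Q$-linear independence does all the work.
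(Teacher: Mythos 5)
Your argument is correct and follows essentially the same route as the paper's proof: reduce via the multidimensional Weyl criterion to the vanishing of the one-dimensional Weyl sums for $\alpha = m_1 a_1 + \cdots + m_d a_d$, observe that $\alpha$ is irrational by the $\Q$-linear independence of $1, a_1, \ldots, a_d$, and invoke Vinogradov's theorem on equidistribution of $\{p\alpha\}$ along the primes. The extra care you take with the normalization and with removing the fractional parts inside the exponential is sound but does not change the substance.
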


\begin{proof}
For convenience, we denote the $n$-th prime by $p_n$, that is, $p_1 {=} 2$, $p_2 {=} 3$, $p_3 {=} 5$, $p_4 {=} 7$, and so on.
By Weyl's criterion (see e.g., \cite[p.48, Theorems 6.2 and 6.3]{KN74}), the claim is equivalent to having that for every $\bz = (z_1, \ldots, z_d) \in \Z^d \backslash \{ 0 \}$, the fractional part of $\langle \bz, ( p_n a_1 , \ldots, p_n a_d ) \rangle = z_1 \cdot p_n a_1 + \ldots + z_d \cdot p_n a_d = p_n \cdot (z_1 a_1 + \ldots + z_d \, a_d)$ for $n=1,2,\ldots$ are uniformly distributed in $[0,1)$.
Note that for any fixed $\bz = (z_1, \ldots, z_d) \in \Z^d \backslash \{ 0 \}$, the number $\widetilde{a} := z_1 a_1 + \ldots + z_d \, a_d$ is \emph{irrational} because $1, a_1, \ldots, a_d$ are linearly independent over $\Q$.
Hence, the result of Vinogradov \cite{Vi37} implies that the numbers $\{ p_n \widetilde{a} \}$, $n=1,2,\ldots$, are uniformly distributed in $[0,1)$, as desired.
\end{proof}

\section{Proof of Theorem \ref{thm:hierarchy-for-finite-intervals-Q-linearly-indep}}
\label{sec:proof-first-main-result}

Proposition \ref{prop:multi-dim-Vinogradov} implies that there exist infinitely many prime numbers $N \in \N$ satisfying
\begin{equation}\label{eqn:Naell-Nbell-intervals-nested}
\begin{split}
0 & \;<\; \{ N a_1 \} \;<\; \{ N a_2 \} \;<\; \ldots \;<\; \{ N a_{L-1} \} \;<\; \{ N a_L \} \\
&\quad \;<\;
\{ N b_L \} \;<\; \{ N b_{L-1} \} \;<\; \ldots \;<\; \{ N b_2 \} \;<\; \{ N b_1 \} \;<\; 1 .
\end{split}
\end{equation}
Among such numbers, choose a large $N \in \N$
so that every spacing between the numbers
$0 < a_1 < b_1 < a_2 < b_2 < \cdots < a_L < b_L < 1$
contains at least one of $\frac{k}{N}$, $k = 1, \ldots, N{-}1$, as an interior point (which clearly requires $2L + 1 \leq N$).
This ensures that with respect to the grid $\frac{1}{N} \Z$ the interval $[a_\ell,b_\ell)$ is partitioned into translates of
\begin{equation}\label{eqn:aell-bell-partitioned-into-three-cases}
\big[ \tfrac{\{ N a_\ell \}}{N} , \tfrac{1}{N} \big) ,
\quad
\big[ 0 , \tfrac{\{ N b_\ell \}}{N} \big) ,
\quad \text{and possibly some extra intervals} \;\; [0, \tfrac{1}{N}) ,
\end{equation}
and that the rightmost segment of $[a_\ell,b_\ell)$, corresponding to $[ 0 , \frac{\{ N b_\ell \}}{N} )$ in \eqref{eqn:aell-bell-partitioned-into-three-cases}, lies in a set $[ \frac{k}{N} , \frac{k+1}{N} )$ which does not intersect the next interval $[a_{\ell+1},b_{\ell+1})$.
Consequently, each of the sets $A_{\geq n} = A_{\geq n} (N,S)$, $n = 1, 2, \ldots, N$, is one of the form
\[
\emptyset ,
\quad
[0, \tfrac{1}{N}) ,
\quad \text{and} \quad
\big[ \tfrac{\{ N a_\ell \}}{N} , \tfrac{\{ N b_\ell \}}{N} \big)
\quad \text{for some} \;\;
\ell \in \{ 0 , 1, \ldots, N-1 \} .
\]
Note that due to \eqref{eqn:Naell-Nbell-intervals-nested}, the translates of $[ \frac{\{ N a_\ell \}}{N} , \frac{1}{N} )$ and $[ 0 , \frac{\{ N b_\ell \}}{N} )$ in \eqref{eqn:aell-bell-partitioned-into-three-cases} together contribute exactly $[ 0, \frac{1}{N} )$ and $[ \frac{\{ N a_\ell \}}{N} , \frac{\{ N b_\ell \}}{N} )$ to the family of sets $A_{\geq n}$.
The nested sets
\[
A_{\geq 1} \;\supset\; A_{\geq 2} \;\supset\; \cdots \;\supset\; A_{\geq N}
\]
are thus given by
\[
\begin{split}
&\overbrace{[0,\tfrac{1}{N}) = \cdots = [0,\tfrac{1}{N})}^{K}
\;\supset\;
\overbrace{\big[ \tfrac{\{ N a_1 \}}{N} , \tfrac{\{ N b_1 \}}{N} \big) \;\supset\;
\cdots \;\supset\;
\big[ \tfrac{\{ N a_L \}}{N} , \tfrac{\{ N b_L \}}{N} \big)}^{L} \\
&\;\supset\;
\overbrace{\emptyset = \cdots = \emptyset}^{N-K-L}
\qquad \text{for some integer} \;\; K \geq L .
\end{split}
\]
Let us associate each set $A_{\geq n}$ with the interval $[a_\ell,b_\ell)$ which it originates from.
The sets $[0,\frac{1}{N})$ can be associated with the intervals $[a_\ell,b_\ell)$ in various ways, but for convenience we will assume
\[
\begin{split}
&\underbrace{\overbrace{[0,\tfrac{1}{N}) = \cdots = [0,\tfrac{1}{N})}^{K_1}}_{\stackrel{\updownarrow}{[a_1,b_1)}}
 = \cdots =
\underbrace{\overbrace{[0,\tfrac{1}{N}) = \cdots = [0,\tfrac{1}{N})}^{K_L}}_{\stackrel{\updownarrow}{[a_L,b_L)}} \\
&\;\supset\;
\overbrace{\underbrace{\big[ \tfrac{\{ N a_1 \}}{N} , \tfrac{\{ N b_1 \}}{N} \big)}_{\stackrel{\updownarrow}{[a_1,b_1)}} \;\supset\;
\cdots \;\supset\;
\underbrace{\big[ \tfrac{\{ N a_L \}}{N} , \tfrac{\{ N b_L \}}{N} \big)}_{\stackrel{\updownarrow}{[a_L,b_L)}}
}^{L}
\;\supset\;
\overbrace{\emptyset = \cdots = \emptyset}^{N-K-L}
\end{split}
\]
where $K = \sum_{\ell=1}^L K_\ell$ with $K_\ell \in \N$ for all $\ell$.

\medskip

\noindent
\textbf{Step 1}. \emph{Construction of the sets $\Lambda_\ell \subset \Z$, $\ell=1, \ldots, L$}. \\
For each $n = 1, \ldots, N$, we apply the result of Seip \cite{Se95} (see the beginning of Section~\ref{sec:intro}) to obtain a set $\Lambda^{(n)} \subset N\Z$ such that $E(\Lambda^{(n)})$ is a Riesz basis for $L^2(A_{\geq n})$; it is easily seen that
\[
\Lambda^{(n)} =
\begin{cases}
N\Z & \text{for} \;\; 1 \leq n \leq K , \\
\subsetneq N\Z & \text{for} \;\; K{+}1 \leq n \leq K{+}L , \\
\emptyset & \text{for} \;\; K{+}L{+}1 \leq n \leq N .
\end{cases}
\]
Lemma \ref{lem:KN15-Lemma2} implies that $E ( \cup_{n=1}^N (\Lambda^{(n)} {+} n) )$ is a Riesz basis for $L^2(S)$.
For each $\ell=1, \ldots, L$, let $\Lambda_\ell$ be the union of $\Lambda^{(n)} {+} n$ over all $n$ such that $A_{\geq n}$ is associated with $[a_\ell,b_\ell)$, that is,
\[
\begin{split}
\Lambda_1 &:= \bigcupdot_{n \in \{ 1, \, 2 , \, \ldots, \, K_1 , \, K+1 \}} \, (\Lambda^{(n)} {+} n)
= \Big( \textstyle \bigcupdot_{n = 1}^{K_1} \, (N\Z {+} n) \Big) \;\bigcupdot\; (\Lambda^{(K+1)} {+} K {+} 1) , \\
\Lambda_2 &:= \bigcupdot_{n \in \{ K_1+1, \, K_1+2 , \, \ldots, \, K_1+K_2 , \, K+2 \}} \, (\Lambda^{(n)} {+} n) \\
\,&= \Big( \textstyle \bigcupdot_{n = K_1+1}^{K_1+K_2} \, (N\Z {+} n) \Big) \;\bigcupdot\; (\Lambda^{(K+2)} {+} K {+} 2) , \\
& \;\; \vdots \\
\Lambda_L &:= \bigcupdot_{n \in \{ K_1+\cdots+K_{L-1}+1, \, \ldots, \, K , \, K+L \}} \, (\Lambda^{(n)} {+} n) \\
&\,= \Big( \textstyle \bigcupdot_{n = K_1+\cdots+K_{L-1}+1}^{K} \, (N\Z {+} n) \Big) \;\bigcupdot\; (\Lambda^{(K+L)} {+} K {+} L) .
\end{split}
\]
Clearly, we have
\[
\bigcupdot_{\ell=1}^L \, \Lambda_\ell
= \bigcupdot_{n=1}^N \, (\Lambda^{(n)} {+} n)
\]
and thus, $E( \cup_{\ell=1}^L \, \Lambda_\ell )$ is a Riesz basis for $L^2(S)$.

\medskip

\noindent
\textbf{Step 2}.
For a subset $J \subset \{ 1, \ldots , L \}$, we set $\Lambda^J := \cup_{\ell \in J} \, \Lambda_\ell$ and $S^J := \cup_{\ell \in J} \, [a_\ell,b_\ell)$.
We claim that \emph{$E( \Lambda^J )$ is a Riesz basis for $L^2(S^J)$}.

First, note that the corresponding sets $A_{\geq n}^J := A_{\geq n} (N,S^J)$ for $n = 1, \ldots, N$ are again of the form
\[
\emptyset ,
\quad
[0, \tfrac{1}{N}) ,
\quad \text{or} \quad
\big[ \tfrac{\{ N a_\ell \}}{N} , \tfrac{\{ N a_\ell \}}{N} \big) \quad \text{for some} \;\;
\ell \in J .
\]
Denoting $J = \{ \ell_1, \ldots, \ell_M \}$ with $\ell_1 < \ldots < \ell_M$,
we see that the nested sets
\[
A_{\geq 1}^J \;\supset\; A_{\geq 2}^J \;\supset\; \cdots \;\supset\; A_{\geq N}^J
\]
are given by
\[
\begin{split}
&\underbrace{\overbrace{[0,\tfrac{1}{N}) = \cdots = [0,\tfrac{1}{N})}^{K_{\ell_1}}}_{\stackrel{\updownarrow}{\big[a_{\ell_1},b_{\ell_1}\big)}}
 = \cdots =
\underbrace{\overbrace{[0,\tfrac{1}{N}) = \cdots = [0,\tfrac{1}{N})}^{K_{\ell_M}}}_{\stackrel{\updownarrow}{\big[a_{\ell_M},b_{\ell_M}\big)}} \\
&\;\supset\;
\overbrace{\underbrace{\Big[ \tfrac{\{ N a_{\ell_1} \}}{N} , \tfrac{\{ N b_{\ell_1} \}}{N} \Big)}_{\stackrel{\updownarrow}{\big[a_{\ell_1},b_{\ell_1}\big)}}
\;\supset\; \cdots \;\supset\;
\underbrace{\Big[ \tfrac{\{ N a_{\ell_M} \}}{N} , \tfrac{\{ N b_{\ell_M} \}}{N} \Big)}_{\stackrel{\updownarrow}{\big[a_{\ell_M},b_{\ell_M}\big)}}
}^{M}
\;\supset\;
\overbrace{\emptyset = \cdots = \emptyset}^{N- K_J - M}
\end{split}
\]
where $K_J := \sum_{\ell \in J} K_{\ell}$.
Note that applying Lemma \ref{lem:KN15-Lemma2} directly to this setup will incur \emph{different shift factors} in the frequency sets.
Indeed, Lemma \ref{lem:KN15-Lemma2} implies that $E( \Lambda' )$ is a Riesz basis for $L^2(S^J)$, with
\begin{equation}\label{eqn:apply-Lemma2-directly-Lambda-prime}
\begin{split}
& \Lambda' := \Big( \textstyle \bigcupdot_{n = 1}^{K_J} \, (N\Z {+} n) \Big) \;\bigcupdot\; (\Lambda^{(K+\ell_1)} {+} K_J {+} 1)
\;\bigcupdot\; (\Lambda^{(K+\ell_2)} {+} K_J {+} 2) \\
&\qquad \qquad \;\bigcupdot\; \cdots
\;\bigcupdot\; (\Lambda^{(K+\ell_M)} {+} K_J {+} M) ,
\end{split}
\end{equation}
where $\Lambda^{(n)} \subset N\Z$ for $n = 1, \ldots, N$ are the sets defined in Step 1.
However, our goal is to show that $E( \cup_{m=1}^M \, \Lambda_{\ell_m} )$ is a Riesz basis for $L^2(S^J)$, where
\[
\begin{split}
\Lambda_{\ell_1} &:= \Big( \textstyle \bigcupdot_{n = K_1+K_2+\cdots+K_{\ell_1-1}+1}^{K_1+K_2+\cdots+K_{\ell_1}} \, (N\Z {+} n) \Big) \;\bigcupdot\; (\Lambda^{(K+\ell_1)} {+} K {+} \ell_1) , \\
\Lambda_{\ell_2} &:= \Big( \textstyle \bigcupdot_{n = K_1+K_2+\cdots+K_{\ell_2-1}+1}^{K_1+K_2+\cdots+K_{\ell_2}} \, (N\Z {+} n) \Big) \;\bigcupdot\; (\Lambda^{(K+\ell_2)} {+} K {+} \ell_2) , \\
& \;\; \vdots \\
\Lambda_{\ell_M} &:= \Big( \textstyle \bigcupdot_{n = K_1+K_2+\cdots+K_{\ell_M-1}+1}^{K_1+K_2+\cdots+K_{\ell_M}} \, (N\Z {+} n) \Big) \;\bigcupdot\; (\Lambda^{(K+\ell_M)} {+} K {+} \ell_M) .
\end{split}
\]

To show this, consider the $K_J {+} M$ sets
\[
\begin{split}
\Omega_1 &:= N\Z {+} (K_1{+}K_2{+}\cdots{+}K_{\ell_1-1}{+}1), \\
\vdots \;\; & \quad\qquad \vdots \\
\Omega_{K_{\ell_1}} &:= N\Z {+} (K_1{+}K_2{+}\cdots{+}K_{\ell_1}), \\
\Omega_{K_{\ell_1}+1} &:= N\Z {+} (K_1{+}K_2{+}\cdots{+}K_{\ell_2-1}{+}1), \\
\vdots \;\; & \quad\qquad \vdots \\
\Omega_{K_{\ell_1}+K_{\ell_2}} &:= N\Z {+} (K_1{+}K_2{+}\cdots{+}K_{\ell_2}), \\
\vdots \;\; & \quad\qquad \vdots \\
\Omega_{K_{\ell_1}+K_{\ell_2}+\cdots+K_{\ell_{M-1}}} &:= N\Z {+} ( K_1{+}K_2{+}\cdots{+}K_{\ell_M-1}{+}1 ), \\
\vdots \;\; & \quad\qquad \vdots \\
\Omega_{K_J} &:= N\Z {+} ( K_1{+}K_2{+}\cdots{+}K_{\ell_M}) = N\Z {+} K_J, \\
\Omega_{K_J+1} &:= \Lambda^{(K+\ell_1)} {+} (K {+} \ell_1), \\
\Omega_{K_J+2} &:= \Lambda^{(K+\ell_2)} {+} (K {+} \ell_2), \\
\vdots \;\; & \quad\qquad \vdots \\
\Omega_{K_J+M} &:= \Lambda^{(K+\ell_M)} {+} (K {+} \ell_M) ,
\end{split}
\]
which partitions $\cup_{m=1}^M \, \Lambda_{\ell_m}$, that is,
$\cupdot_{n=1}^{K_J +M} \, \Omega_n = \cup_{m=1}^M \, \Lambda_{\ell_m}$.
Here, the sets $\Omega_n$ are exactly ordered in the way that $E(\Omega_n)$ is a Riesz basis for $L^2(A_{\geq n}^J)$.
Note that while the $K_J {+} M$ components of $\Lambda'$ in \eqref{eqn:apply-Lemma2-directly-Lambda-prime} have \emph{consecutive shift factors}, namely from $1$ up to $K_J {+} M$, the shift factors associated with $\Omega_n$ are not consecutive in general.
However, since $N \in \N$ is prime, Lemma \ref{lem:KN15-Lemma2-variant-Nprime} implies that $E( \cup_{m=1}^M \, \Lambda_{\ell_m} ) = E( \cupdot_{n=1}^{K_J +M} \, \Omega_n )$ is a Riesz basis for $L^2(S^J)$.
This completes the proof.

\section{Proof of Theorem \ref{thm:unit-interval-complementable-with-finite-union-intervals}}
\label{sec:proof-second-main-result}

To prove Theorem \ref{thm:unit-interval-complementable-with-finite-union-intervals}, we will use Lemma \ref{lem:KN15-Lemma2} which is the key lemma of Kozma and Nitzan \cite{KN15}.
Note that by Lemma \ref{lem:RB-basic-operations}, one may replace the frequency set $\cup_{n=1}^N (\Lambda_n {+} n)$ in Lemma \ref{lem:KN15-Lemma2} by $\cup_{n=1}^N (\Lambda_n {+} n{-}1)$, while preserving the Riesz basis property.

We will first prove the case $L = 1$ and then extend the proof to the case $L \geq 2$.

\medskip

\noindent
\textbf{Case $L = 1$}. \
Given a set $V = [0,1) \cup [a,b) \subset [0,N)$ with $N \in \N$ and $1 \leq a < b \leq N$, let $S := \frac{1}{N} V = [0,\frac{1}{N}) \cup [\frac{a}{N},\frac{b}{N}) \subset [0,1)$.
We will apply Lemma \ref{lem:KN15-Lemma2} directly to this set $S$.
There are two cases, either $\{ a \} \leq \{ b \}$ or $\{ b \} < \{ a \}$.

First, assume that $0 \leq \{ a \} \leq \{ b \} < 1$.
Then there exists a number $M \in \N$ such that
\[
A_{\geq 1} = A_{\geq 2} = \cdots =A_{\geq M} = [0,\tfrac{1}{N})
\;\supset\;
A_{\geq M+1} = \big[ \tfrac{ \{ a \} }{N} , \tfrac{ \{ b \} }{N} \big)
\;\supset\;
A_{\geq M+2} = \emptyset .
\]
Clearly, we may choose the canonical frequency sets $\Lambda_1 = \cdots = \Lambda_M = N \Z$ for $A_{\geq 1} = A_{\geq 2} = \cdots = A_{\geq M} = [0,\tfrac{1}{N})$, so that for each $n = 1,\ldots,M$, the system $E(\Lambda_n)$ is a Riesz basis (in fact, an orthogonal basis) for $L^2(A_{\geq n})$.
Also, there exists a set $\Lambda_{M+1} \subset N\Z$ such that $E(\Lambda_{M+1})$ is a Riesz basis for $L^2\big[ \frac{ \{ a \} }{N} , \frac{ \{ b \} }{N} \big)$; indeed, such a set $\Lambda_{M+1}$ can be obtained from the result of Seip \cite{Se95} with a dilation (see Lemma \ref{lem:RB-basic-operations}).
Then Lemma \ref{lem:KN15-Lemma2} with shift factors `$n-1$' in place of `$n$' yields that $E( (\cup_{n = 1}^{M} N \Z {+} n {-}1) \cup (\Lambda_{M+1} {+} M) )$ is a Riesz basis for $L^2(S) = L^2( \frac{1}{N} V )$. By a dilation, we obtain that $E( (\cup_{n = 1}^{M} \Z {+} \frac{n {-}1}{N}) \cup (\frac{1}{N} \Lambda_{M+1} {+} \frac{M}{N}) ) = E( \Z \cup (\cup_{k = 1}^{M-1} \Z {+} \frac{k}{N}) \cup (\frac{1}{N} \Lambda_{M+1} {+} \frac{M}{N}) )$ is a Riesz basis for $L^2(V) = L^2 ([0,1) \cup [a,b) )$.
Now, we claim that $E(\Lambda')$ is a Riesz basis for $L^2[a,b)$, where $\Lambda' := ( \cup_{k = 1}^{M-1} \Z {+} \frac{k}{N} ) \cup ( \frac{1}{N} \Lambda_{M+1} {+} \frac{M}{N} )$.
To see this, we again apply Lemma \ref{lem:KN15-Lemma2} (the original version) to the set $S' := \frac{1}{N} V'$ with $V' = [a,b)$.
One can easily check that the corresponding set $A_{\geq n}'$ is equal to the set $A_{\geq n-1}$ above, that is,
\begin{equation}\label{eqn:Ageq-for-subinterval-ab}
A_{\geq 1}' = \cdots =A_{\geq M-1}' = [0,\tfrac{1}{N})
\;\supset\;
A_{\geq M}' = \big[ \tfrac{ \{ a \} }{N} , \tfrac{ \{ b \} }{N} \big)
\;\supset\;
A_{\geq M+1}' = \emptyset .
\end{equation}
Then Lemma \ref{lem:KN15-Lemma2} implies that $E( (\cup_{k = 1}^{M-1} \Z {+} \frac{k}{N}) \cup (\frac{1}{N} \Lambda_{M+1} {+} \frac{M}{N}) )$ is a Riesz basis for $L^2[a,b)$, as claimed.

Now, assume that $0 \leq \{ b \} < \{ a \} < 1$.
Then there exists a number $M \in \N$ such that
\[
A_{\geq 1} = \cdots =A_{\geq M} = [0,\tfrac{1}{N})
\;\supset\;
A_{\geq M+1} = \big[ 0, \tfrac{ \{ b \} }{N} \big) \cup \big[ \tfrac{ \{ a \} }{N}, \tfrac{1}{N} \big)
\;\supset\;
A_{\geq M+2} = \emptyset .
\]
Again, using the result of Seip \cite{Se95} with a dilation, we obtain a set $\Lambda_{M+1} \subset N\Z$ such that $E(\Lambda_{M+1})$ is a Riesz basis for $L^2\big[ \frac{ \{ a \} }{N} , \frac{ 1+\{ b \} }{N} \big)$. Since all elements in $E(N\Z)$ are $\frac{1}{N}$-periodic, it follows that $E(\Lambda_{M+1})$ is a Riesz basis for $L^2 \big( \big[ 0, \frac{ \{ b \} }{N} \big) \cup \big[ \frac{ \{ a \} }{N}, \frac{1}{N} \big) \big)$. Then, by similar arguments as in the case $\{ a \} \leq \{ b \}$, we deduce that $E( \Z \cup (\cup_{k = 1}^{M-1} \Z {+} \frac{k}{N}) \cup (\frac{1}{N} \Lambda_{M+1} {+} \frac{M}{N}) )$ is a Riesz basis for $L^2 ( [0,1) \cup [a,b) )$, and that
$E( (\cup_{k = 1}^{M-1} \Z {+} \frac{k}{N}) \cup (\frac{1}{N} \Lambda_{M+1} {+} \frac{M}{N}) )$ is a Riesz basis for $L^2[a,b)$.

\medskip

\noindent
\textbf{Case $L \geq 2$}. \
We will use essentially the same arguments as in the case $L=1$, but employ the main result of Kozma and Nitzan \cite{KN15} instead of Seip \cite{Se95}.
Given a set $V = [0,1) \cup [a_1,b_1) \cup \cdots \cup [a_L,b_L) \subset [0,N)$ with $L, N \in \N$ and $1 \leq a_1 < b_1 < \cdots < a_L < b_L \leq N$, let $S := \frac{1}{N} V = [0,\frac{1}{N}) \cup [\frac{a_1}{N},\frac{b_1}{N}) \cup \cdots \cup [\frac{a_L}{N},\frac{b_L}{N}) \subset [0,1)$.
As in the case $L=1$, we will apply Lemma \ref{lem:KN15-Lemma2} to this set $S$.

Note that there are finitely many possible ordering of the values
\[
0
\;\leq\;
\tfrac{ \{ a_1 \} }{N} , \tfrac{ \{ b_1 \} }{N} , \cdots , \tfrac{ \{ a_L \} }{N}, \tfrac{ \{ b_L \} }{N}
\;<\;
\tfrac{1}{N} ,
\]
where equalities are also allowed, e.g., the values are all zero if all $a_\ell$ and $b_\ell$ are integers.
It is easily seen that besides $0$ and $\tfrac{1}{N}$, these are the only possible values that can be the boundary points of $A_{\geq n}$, $n=1, \ldots, N$.
In any case, since $[0,\frac{1}{N}) \subset S$ we have
\[
A_{\geq 1} = [0,\tfrac{1}{N})
\;\supset\; A_{\geq 2}
\;\supset\; \cdots
\;\supset\; A_{\geq N} ,
\]
where each of the sets $A_{\geq 2} , \ldots, A_{\geq N}$ is either empty or a finite union of intervals.
One can therefore use the main result of \cite{KN15} with a dilation, to construct sets $\Lambda_1 {=} N\Z, \Lambda_2, \Lambda_3 , \ldots, \Lambda_N \subset N\Z$ such that for each $n$, the system $E(\Lambda_n)$ is a Riesz basis for $L^2(A_{\geq n})$.
The rest of the proof is similar to the case $L=1$.
\hfill $\Box$

\section*{Acknowledgments}
A.~Caragea acknowledges support by the DFG (German Research Foundation) Grant PF 450/11-1.
D.G.~Lee acknowledges support by the DFG Grants PF 450/6-1 and PF 450/9-1.
The authors would like to thank
Carlos Cabrelli, Diana Carbajal, and Felix Voigtlaender for valuable comments concerning the linear independence over $\Q$.
The names of authors are ordered alphabetically by convention, and both authors contributed equally to this work.

\appendix

\renewcommand{\thetheorem}{A.\arabic{theorem}}
\setcounter{theorem}{0}

\section{Proof of Lemma \ref{lem:KN15-Lemma2-variant-Nprime}}
\label{appendix:proof-or-KN15-Lemma2-variant-Nprime}

To prove Lemma \ref{lem:KN15-Lemma2-variant-Nprime}, we will follow the proof strategy of Lemma \ref{lem:KN15-Lemma2} \cite[Lemma 2]{KN15}.
For any $N \in \N$, a measurable set $S \subset [0,1)$, and $n = 0,1, \ldots, N$, we define
\[
\begin{split}
& A_n
:= \Big\{ t \in [0, \tfrac{1}{N}) : t+ \tfrac{k}{N} \in S \;\; \text{for exactly $n$ values of} \;\; k \in \{ 0 , 1, \ldots, N-1 \} \Big\} , \\
& B_n
:= \Big\{ t \in S : t+ \tfrac{k}{N} \in S \;\; \text{for exactly $n$ integers} \;\; k \in \Z \Big\} .
\end{split}
\]
Obviously, considering the set $B_n$ modulo $\frac{1}{N}$ yields the \emph{$n$-fold} of $A_n$, which means that each element of $A_n$ corresponds to exactly $n$ points of $B_n$ that are distanced apart by multiples of $\frac{1}{N}$.
Note that $\{ A_n \}_{n=0}^N$ and $\{ B_n \}_{n=0}^N$ form partitions of $[0, \frac{1}{N})$ and $[0,1)$, respectively, that is, $[0, \frac{1}{N}) = \cupdot_{n=0}^N \, A_n$ and $[0,1) = \cupdot_{n=0}^N \, B_n$. Also, the family $\{ B_n \}_{n=1}^N$ forms a partition of $S$, i.e., $S = \cupdot_{n=1}^N \, B_n$.
For $f \in L^2(S)$ and $n = 1, \ldots, N$, we denote by $f_n$ the restriction of $f$ to $B_n$, that is, $f_n(t) = f(t)$ for $t \in B_n$ and $0$ otherwise. This yields the decomposition $L^2(S) \ni f = f_1 + \ldots + f_N$ with all $f_n$ having disjoint support.
Note that the set $A_{\geq n}$ given by \eqref{eqn:A-geq-n} can be expressed as $A_{\geq n} = \cup_{\ell=n}^N \, A_{\ell}$ for $n = 1, \ldots, N$.
Similarly, we define $B_{\geq n} := \cup_{\ell=n}^N \, B_{\ell}$ and $f_{\geq n} := \sum_{\ell=n}^N f_{\ell}$ for $n = 1, \ldots, N$.
For brevity, we write $\Lambda := \cup_{\ell=1}^N (\Lambda_{\ell} {+} j_{\ell})$.

\medskip

\noindent
\textbf{Frame}. \
Assume that $\Lambda_1, \ldots, \Lambda_N \subset N\Z$ are such that $E(\Lambda_n)$ is a frame for $L^2(A_{\geq n})$.
To prove that $E(\Lambda)$ is a frame for $L^2(S)$, it is enough to show that there exists a constant $c > 0$ satisfying
\[
\begin{split}
&\sum_{\lambda \in \Lambda} \big| \langle f, e^{2 \pi i \lambda (\cdot)} \rangle_{L^2(S)} \big|^2
\;\geq\; c \, \| f_n \|_{L^2(S)}^2 - \sum_{\ell=1}^{n-1} \| f_\ell \|_{L^2(S)}^2  \\
&\quad \text{for all} \;\; f \in L^2(S) \;\; \text{and} \;\; n = 1,\ldots,N .
\end{split}
\]
In turn, it is enough to show that there exists a constant $c > 0$ satisfying
\begin{equation}\label{eqn:proofFR-claim}
\sum_{\lambda \in \Lambda} \big| \langle f_{\geq n}, e^{2 \pi i \lambda (\cdot)} \rangle_{L^2(S)} \big|^2
\;\geq\; c \, \| f_n \|_{L^2(S)}^2 ,
\quad f \in L^2(S) , \;\; n = 1,\ldots,N .
\end{equation}
Such reductions are essentially due to the decomposition $f = f_1 + \ldots + f_N$ with all $f_n$ having disjoint support, and due to $S \subset [0,1)$ and $\Lambda \subset \Z$; see \cite[Eqns.~(6)--(7)]{KN15} for detailed arguments.

To prove \eqref{eqn:proofFR-claim}, fix any $f \in L^2(S)$ and any $n \in \{ 1, \ldots, N \}$.
Since $f_{\geq n}$ is supported in $\cup_{\ell=n}^N \, B_{\ell} \subset S$, we have for any $\lambda \in \Lambda_{\ell} {+} j_{\ell}$ with $\ell \in \{ 1, \ldots, N \}$,
\[
\begin{split}
\big\langle f_{\geq n}, e^{2 \pi i \lambda (\cdot)} \big\rangle_{L^2(S)}
&= \int_0^1  f_{\geq n}(t) \, e^{- 2 \pi i \lambda t} \, dt \\
&= \int_0^{1/N}  \sum_{k=0}^{N-1} f_{\geq n}\big(t+\tfrac{k}{N}\big) \, \exp (- 2 \pi i \lambda (t+\tfrac{k}{N}) ) \, dt \\
&= \int_0^{1/N}  h_{n,\ell} (t) \, e^{- 2 \pi i \lambda t} \, dt
= \big\langle h_{n,\ell}, e^{2 \pi i \lambda (\cdot)} \big\rangle_{L^2[0,\frac{1}{N})} \;,
\end{split}
\]
where
\begin{equation}\label{eqn:def-h-n-ell}
h_{n,\ell} (t) := \mathds{1}_{A_{\geq n}} (t) \,\cdot \sum_{k=0}^{N-1} f_{\geq n}\big(t+\tfrac{k}{N}\big) \, e^{- 2 \pi i j_{\ell} k/N} .
\end{equation}
Note that for $\ell = 1,\ldots,n$, the function $h_{n,\ell}$ is supported in $A_{\geq n} \subset A_{\geq \ell}$. Since $E(\Lambda_\ell)$ is a frame for $L^2(A_{\geq \ell})$, say, with lower frame bound $\alpha_\ell > 0$, we have
\begin{equation}\label{eqn:proofFR-sum-lambda-in-Lambdajell}
\begin{split}
\sum_{\lambda \in \Lambda_{\ell} + j_{\ell}} \big| \langle f_{\geq n}, e^{2 \pi i \lambda (\cdot)} \rangle_{L^2(S)} \big|^2
&\;=\; \sum_{\lambda \in \Lambda_{\ell} + j_{\ell}} \big| \langle h_{n,\ell}, e^{2 \pi i \lambda (\cdot)} \rangle_{L^2[0,\frac{1}{N})} \big|^2 \\
&\;\geq\; \alpha_\ell \, \| h_{n,\ell} \|^2 .
\end{split}
\end{equation}
Summing up \eqref{eqn:proofFR-sum-lambda-in-Lambdajell} for $\ell = 1,\ldots,n$ gives
\begin{equation}\label{eqn:proofFR-sumup-over-ell-in-Ln}
\begin{split}
\sum_{\lambda \in \Lambda} \big| \langle f_{\geq n}, e^{2 \pi i \lambda (\cdot)} \rangle_{L^2(S)} \big|^2
&\;\geq\;
\sum_{\ell=1}^n \sum_{\lambda \in \Lambda_{\ell} + j_{\ell}} \big| \langle f_{\geq n}, e^{2 \pi i \lambda (\cdot)} \rangle_{L^2(S)} \big|^2 \\
&\;\geq\;
\Big(\min_{1 \leq \ell \leq n}  \alpha_\ell \Big) \cdot \sum_{\ell=1}^n \| h_{n,\ell} \|^2 \\
&\;\geq\;
\Big(\min_{1 \leq \ell \leq n}  \alpha_\ell \Big) \cdot \sum_{\ell=1}^n \| h_{n,\ell} \cdot \mathds{1}_{A_n} \|^2 .
\end{split}
\end{equation}
On the other hand, for any fixed $t \in A_n$, Equation~\eqref{eqn:def-h-n-ell} becomes
\[
h_{n,\ell} (t) = \sum_{k=0}^{N-1} f_{\geq n}\big(t+\tfrac{k}{N}\big) \, e^{- 2 \pi i j_{\ell} k/N}
\]
and collecting the equation for $\ell = 1,\ldots,n$ gives the $n {\times} N$ linear system
\[
\begin{bmatrix}
h_{n,\ell} (t)
\end{bmatrix}_{\ell=1}^n
=
\begin{bmatrix}
e^{- 2 \pi i j_{\ell} k/N}
\end{bmatrix}_{1 \leq \ell \leq n, 0 \leq k \leq N-1}
\begin{bmatrix}
f_{\geq n}\big(t+\tfrac{k}{N}\big)
\end{bmatrix}_{k=0}^{N-1} .
\]
Since $t \in A_n$, the vector $[ f_{\geq n}\big(t+\tfrac{k}{N}\big)
]_{0 \leq k \leq N-1}$ has exactly $n$ nonzero entries, say, at the indices $k_1 < \ldots < k_n$ from $\{0,1,\ldots,N-1\}$. This reduces the above system to an $n {\times} n$ linear system
\[
\begin{bmatrix}
h_{n,\ell} (t)
\end{bmatrix}_{\ell=1}^n
=
\begin{bmatrix}
e^{- 2 \pi i j_{\ell} k_r/N}
\end{bmatrix}_{1 \leq \ell \leq n, 1 \leq r \leq n}
\begin{bmatrix}
f_{\geq n}\big(t+\tfrac{k_r}{N}\big)
\end{bmatrix}_{r=1}^n ,
\]
where the associated matrix $[ e^{- 2 \pi i j_{\ell} k_r/N} ]_{1 \leq \ell \leq n, 1 \leq r \leq n}$
is invertible since $N$ is prime (by Chebotar\"{e}v's theorem on roots of unity, see e.g., \cite{SL96}).
As there are only finitely many possible choices of $k_1 < \ldots < k_n$ from $\{0,1,\ldots,N-1\}$, there exists a constant $c' > 0$ such that
\[
\sum_{\ell=1}^n  \big| h_{n,\ell} (t) \big|^2
\;\geq\;
c' \,
\sum_{k=0}^{N-1}  \big| f_{\geq n}\big(t+\tfrac{k}{N}\big) \big|^2
\quad \text{for all} \;\; t \in A_n .
\]
Integrating over $t \in A_n$ then gives
\[
\sum_{\ell=1}^n \| h_{n,\ell} \cdot \mathds{1}_{A_n} \|^2
\geq
c'
\int_{A_n} \sum_{k=0}^{N-1}  \big| f_{\geq n}\big(t+\tfrac{k}{N}\big) \big|^2
=
c' \, \| f_n \|_{L^2(B_n)}^2
=
c' \, \| f_n \|_{L^2(S)}^2 .
\]
Combining this inequality with \eqref{eqn:proofFR-sumup-over-ell-in-Ln} yields the desired inequality \eqref{eqn:proofFR-claim}.

\medskip

\noindent
\textbf{Riesz sequence}. \
Assume that $\Lambda_1, \ldots, \Lambda_N \subset N\Z$ are such that $E(\Lambda_n)$ is a Riesz sequence in $L^2(A_{\geq n})$.
We will show that $E(\Lambda)$ is a Riesz sequence in $L^2(S)$ by using the \emph{frame} part which is proved above (a similar trick was used in \cite[Lemma 7]{KN16}).
Let $S' := [0,1) \backslash S$ and let $A_{\geq n}'$, $n=1,\ldots,N$, be the corresponding sets of \eqref{eqn:A-geq-n} for $S'$.
It is easily seen that $A_{\geq n}' = [0,\frac{1}{N}) \backslash A_{\geq N+1-n}$ for $n=1,\ldots,N$.
Since $E(\Lambda_{N+1-n})$ is a Riesz sequence in $L^2(A_{\geq N+1-n})$, we deduce from Proposition \ref{prop:Prop5-4-BCMS19} with a dilation that the system $E(N\Z \backslash \Lambda_{N+1-n})$ is a frame for $L^2 ( [0,\frac{1}{N}) \backslash A_{\geq N+1-n} ) = L^2(A_{\geq n}')$; see the discussion after Proposition \ref{prop:Prop5-4-BCMS19}.
The \emph{frame} part then implies that the system
$E( \cup_{n=1}^N ( (N\Z \backslash \Lambda_{N+1-n}) {+} j_{N+1-n} ) )$ is a frame for $L^2 (S')$.
Finally, again by Proposition \ref{prop:Prop5-4-BCMS19}, we conclude that
the system $E(\Lambda) = E( \cup_{n=1}^N (\Lambda_n {+} j_n) ) = E( \Z \backslash \cup_{n=1}^N ( (N\Z \backslash \Lambda_{N+1-n}) {+} j_{N+1-n} ) )$ is a Riesz sequence in $L^2(S) = L^2 ( [0,1) \backslash S' )$.

\medskip

\noindent
\textbf{Riesz basis}. \
Since a family of vectors in a separable Hilbert space is a Riesz basis if and only if it is both a frame and a Riesz sequence, this part follows immediately by combining the \emph{frame} and \emph{Riesz sequence} parts.
\hfill $\Box$

\end{document}